\DeclareRobustCommand{\lyxsout}[1]{\ifx\\#1\else\sout{#1}\fi}
\numberwithin{equation}{section}
\numberwithin{figure}{section}
\theoremstyle{plain}
\newtheorem{thm}{\protect\theoremname}[section]
\theoremstyle{remark}
\newtheorem{rem}[thm]{\protect\remarkname}
\theoremstyle{plain}
\newtheorem{lem}[thm]{\protect\lemmaname}
\theoremstyle{plain}
\newtheorem{prop}[thm]{\protect\propositionname}
\theoremstyle{definition}
\newtheorem{defn}[thm]{\protect\definitionname}
\theoremstyle{definition}
\newtheorem{example}[thm]{\protect\examplename}
\journal{~}
\pgfplotsset{compat=1.15}
\renewcommand{\rho}{\varrho}
\newcommand{\reftext}{}
\DeclareMathOperator{\dom}{dom}
\DeclareMathOperator{\spann}{span}
\DeclareMathOperator{\supp}{supp}
\DeclareMathOperator{\card}{card}
\renewcommand{\phi}{\varphi}
\renewcommand{\epsilon}{\varepsilon}
\providecommand{\definitionname}{Definition}
\providecommand{\examplename}{Example}
\providecommand{\lemmaname}{Lemma}
\providecommand{\propositionname}{Proposition}
\providecommand{\remarkname}{Remark}
\providecommand{\theoremname}{Theorem}
\begin{document}

\begin{frontmatter}{}

\title{Approximation order of Kolmogorov diameters via $L^{q}$-spectra
and applications to polyharmonic operators}

\author{Marc~Kesseböhmer\corref{cor1}}

\ead{mhk@uni-bremen.de}

\author{Aljoscha~Niemann\corref{cor1}}

\ead{niemann1@uni-bremen.de}

\address{Fachbereich 3 -- Mathematik und Informatik, University of Bremen,
Bibliothekstr. 1, 28359 Bremen, Germany}

\cortext[cor1]{Corresponding authors}
\begin{abstract}
We establish a connection between the $L^{q}$-spectrum of a Borel measure
$\nu $ on the $m$-dimensional unit cube and the approximation order of
Kolmogorov diameters of the unit sphere with respect to Sobolev norms in
$L_{\nu }^{p}$. This leads to improvements of classical results of Borzov
and Birman/Solomjak for a broad class of singular measures. As an application,
we consider spectral asymptotics of polyharmonic operators and obtain improved
upper bounds of the decay rate of their eigenvalues. For measures with non-trivial absolutely continuous parts as well as for self-similar measures the exact approximation orders are stated.\end{abstract}
\begin{keyword}
Kolmogorov diameters/widths, polyharmonic operator; spectral asymptotics;
$L^{q}$-spectrum; piecewise polynomial approximations; Kre\u{\i}n--Feller
operator, Sobolev spaces, adaptive approximation algorithms. \MSC[2020]
46A32; 35P20; 42B35; 31B30; 28A80
\end{keyword}

\end{frontmatter}{}

\tableofcontents{}

\section{Introduction and statement of main results}
\label{sec1}

Let us start with some basic notations. Let $\mathbb{R}^{m}$ denote the
$m$-dimensional euclidean space, $m\in \mathbb N$. For a multi-index
$k :=  \left (k_{1},\dots ,k_{m}\right )\in \mathbb N^{m}$ we define
$|k| :=  \sum _{i=1}^{m}k_{i}$ and
$x^{k} :=  \prod _{i=1}^{m}x_{i}^{k_{i}}$ and for a bounded open subset
$\Omega \subset \mathbb{R}^{m}$ and $p\geq 1$, we let
$L^{p}(\Omega )$ denote the set of real-valued $p$-integrable functions
on $\Omega $ with respect to the Lebesgue measure $\Lambda $ restricted
to $\Omega $. The Sobolev space $W_{p}^{\ell}(\Omega )$ (see e.g.
\citep{MR1125990} and \citep{1961RuMa}) is defined to be the set of all
functions $f\in L^{p}(\Omega )$ for which the weak derivatives up to order
$\ell \in \mathbb N$ lie in $L^{p}(\Omega )$ and
\begin{equation*}
\left \Vert f\right \Vert _{W_{p}^{\ell}(\Omega )} :=  \left
\Vert f\right \Vert _{L^{p}(\Omega )}+\left \Vert f\right \Vert _{L^{
\ell ,p}(\Omega )}<\infty ,
\end{equation*}
where we set
$\left \Vert f\right \Vert _{L^{\ell ,p}(\Omega )} :=  \left (
\int _{\Omega}\left |\nabla _{\ell}f\right |^{p}\;\mathrm d\Lambda
\right )^{1/p}$ with
$\left |\nabla _{\ell}f\right | :=  \left (\sum _{|k|=\ell}
\left |D^{k}f\right |^{2}\right )^{1/2}$ and
$D^{k}f :=  \partial ^{|k|}/\left (\partial _{x_{1}}^{k_{1}}
\cdot \cdot \cdot \partial _{x_{\ell}}^{k_{\ell}}\right )f$. We let
$W_{0,p}^{\ell}(\text{$\Omega$)}$ denote the completion of
$\mathcal{C}_{c}^{\infty}(\Omega )$ with respect to
$\left \Vert \,\cdot \,\right \Vert _{W_{p}^{\ell}(\Omega )}$, where
$\mathcal{C}_{c}^{\infty}(\Omega )$ denotes the set of all infinitely differentiable
functions with compact support in $\Omega $. For any half-open cube
$Q\subset \mathbb{R}^{m}$, which---throughout the paper---are assumed to
have edges parallel to the coordinate axes, we have by definition of the
weak derivatives that $W_{p}^{\ell}(Q)=W_{p}^{\ell}(\mathring{Q})$, where
$\mathring{Q}$ denotes the interior of $Q$. Note that for the space
$W_{0,p}^{\ell}(Q)$ an equivalent norm is given by
$\left \Vert \,\cdot \,\right \Vert _{L^{\ell ,p}(Q)}$. If
$\ell p/m>1$, then $W_{p}^{\ell}\left (Q\right )$ is compactly embedded
into
$\left (\mathcal{C}(\overline{Q}),\left \Vert \,\cdot \,\right \Vert _{
\mathcal{C}(\overline{Q})}\right )$, with
$\left \Vert \,\cdot \,\right \Vert _{\mathcal{C}(\overline{Q})}$ denoting
the uniform norm, and therefore we will always pick a continuous representative
of $W_{p}^{\ell}\left (Q\right )$. For the set of continuous function from
$A\subset \mathbb{R}^{m}$ to $\mathbb{R}$ we write
$\mathcal{C}\left (A\right )$.

For a normed vector space
$\left (V,\left \Vert \,\cdot \,\right \Vert _{V}\right )$ and a subset
$K\subset V$, the \textit{Kolmogorov $n$-diameter} (or $n$\emph{-widths})\emph{
of $K$ in $V$}, $n\in \mathbb N$, is given by
\begin{align*}
d_{n}\left (K,V\right ) &  :=  \inf \left \{ \sup _{x\in K}\inf _{y
\in V_{n}}\left \Vert x-y\right \Vert _{V}:V_{n}
\text{ is $n$-dimensional subspace of }V\right \} .
\end{align*}
If $K$ is pre-compact, then the $n$-diameters converge to zero and one
could say that the $n$-diameter $d_{n}\left (K,V\right )$ measures the
extend to which $K$ can be approximated by $n$-dimensional subspaces of
$V$. We call the value
\begin{equation*}
\overline{\mathbf{ord}}\left (K,V\right ) :=  \limsup _{n\to
\infty}\frac{\log \left (d_{n}\left (K,V\right )\right )}{\log n}
\end{equation*}
the \emph{upper} \emph{approximation order of $K$ in $V$}. If the upper approximation
order coincides with the \emph{lower approximation order}
$\mathscr{\underline{\mathbf{ord}}}\left (K,V\right )$ defined by replacing
the limit superior with the limit inferior in the above definition, we
call the common value $\mathscr{\mathbf{ord}}\left (K,V\right )$ the
\emph{approximation order}. See \citep{MR774404} for further details on
this topic. For the \emph{unit sphere} in $V$ we write
$\mathscr{S}V :=  \left \{ f\in V:\left \Vert f\right \Vert _{V}=1
\right \} $. In the following we will concentrate on the particular choice
$V=L_{\nu}^{q}\left (\textbf{Q}\right )$ for a Borel measure $\nu $ on the
half-open unit cube $\textbf{Q} :=  \left (0,1\right ]^{m}$ and
$K\in \left \{ \mathscr{S}W_{p}^{\ell}\left (\textbf{Q}\right ),
\mathscr{S}W_{0,p}^{\ell}\left (\textbf{Q}\right )\right \} $,
$q\geq p>1$. Throughout, we will assume that
%
\begin{equation}
\varrho  :=  q\left (\ell -m/p\right )>0\;\:\text{and }\:q\geq p>1.
\label{eq:StandingAssumption}
\end{equation}
Under this condition, using the Landau symbols, it has been shown in
\citep{MR0217487} that
%
\begin{equation}
d_{n}\left (\mathscr{S}W_{p}^{\ell},L_{\nu}^{q}\right )=O\left (n^{-
\left (\ell /m-1/p+1/q\right )}\right )
\label{eq:BirmanSolomanjakOld}
\end{equation}
and in the case that $\nu $ is a singular measure with respect to the Lebesgue
measure, we know from \citep{Borzov1971} that even
\begin{equation*}
d_{n}\left (\mathscr{S}W_{p}^{\ell},L_{\nu}^{q}\right )=o\left (n^{-
\left (\ell /m-1/p+1/q\right )}\right ).
\end{equation*}
In this paper we want to address the question to what extent these estimates
can be effectively improved for arbitrary Borel measures on
$\textbf{Q}$. We will see how our main result can be obtained from auxiliary
measure-geometric quantities involving the $L^{q}$-spectrum of
$\nu $ combined with some ideas from \citep{MR0217487} dealing with piecewise
polynomial approximation in $L_{\nu}^{q}$ of elements in
$W_{p}^{\ell}(\textbf{Q})$ (see Section~\ref{subsec:Theorems-on-approximation}).
For $n\in \mathbb N$, we set
\begin{equation*}
\mathcal{D}_{n} :=  \left \{ Q=\prod _{k=1}^{m}\left (l_{k}2^{-n},(l_{k}+1)2^{-n}
\right ]:(l_{k})_{k=1,\dots ,m}\in \mathbb{Z}^{m},\nu \left (Q\right )>0
\right \} ,\mathcal{D} :=  \bigcup _{n\in \mathbb N}\mathcal{D}_{n}
\end{equation*}
and the \emph{$L^{q}$-spectrum} of $\nu $ is given, for
$s\in \mathbb{R}$, by
\begin{equation*}
\beta _{\nu}(s) :=  \limsup _{n\rightarrow \infty}\beta _{n}^{
\nu}(s)\,\text{\,with\,\, }\beta _{n}^{\nu}\left (s\right ) :=
\log \left (\sum _{C\in \mathcal{D}_{n}}\nu (C)^{s}\right )/\log
\left (2^{n}\right ).
\end{equation*}
The $L^{q}$-spectrum has gained some high attention from various authors
in recent years, e.g. \citep{MR3897401,MR3919361,KN2022}. Note that
$\beta _{\nu}$ is---as a limit superior of convex functions---itself a
convex function and that $\beta _{\nu}(0)$ is equal to the
\emph{upper Minkowski dimension} of $\supp \nu $ denoted by
$\overline{\dim}_{M}\left (\nu \right )$. Before stating our main result,
we introduce the following key quantity
\begin{equation*}
s_{b} :=  \inf \left \{ s>0:\beta _{\nu}(s)-bs\leq 0\right \} \;
\text{ for }b>0.
\end{equation*}

\begin{thm}
\label{thm:Estimation-n-Diameter}
Assuming \textup{\reftext{(\ref{eq:StandingAssumption})}}, we have
\begin{align*}
\overline{\mathbf{ord}}\left (\mathscr{S}W_{p}^{\ell},L_{\nu}^{q}
\right ) & \leq -\frac{1}{q\cdot s_{\varrho }}.
\end{align*}
\end{thm}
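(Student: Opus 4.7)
The plan is to couple a classical piecewise-polynomial approximation on \emph{adaptive} dyadic partitions with a cardinality bound derived from the $L^{q}$-spectrum of $\nu$. The quantity $s_{\varrho}$ will govern how deeply one must refine, and the exponent $-1/(qs_{\varrho})$ will emerge from balancing approximation error against partition size.

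First, I would establish the local estimate. For $Q\in\mathcal{D}_{n}$, a Bramble--Hilbert/Sobolev--Morrey argument (valid since $\varrho>0$ and $q\ge p$ force $\ell p/m>1$) produces a polynomial $P_{Q}f$ of degree strictly less than $\ell$ with
\[
\|f-P_{Q}f\|_{\mathcal{C}(\overline{Q})}\lesssim (2^{-n})^{\ell-m/p}\,\|f\|_{L^{\ell,p}(Q)}.
\]
Multiplying by $\nu(Q)^{1/q}$ and raising to the $q$-th power yields
\[
\|f-P_{Q}f\|_{L_{\nu}^{q}(Q)}^{q}\lesssim\alpha_{Q}\,\|f\|_{L^{\ell,p}(Q)}^{q},\qquad \alpha_{Q}:=\nu(Q)\,2^{-n\varrho}.
\]
For a dyadic partition $\mathcal{P}\subset\mathcal{D}$ of $\mathbf{Q}$, using $q\ge p$ together with $\|f\|_{L^{\ell,p}(Q)}\le\|f\|_{W_{p}^{\ell}(\mathbf{Q})}=1$ (so $\|f\|_{L^{\ell,p}(Q)}^{q}\le\|f\|_{L^{\ell,p}(Q)}^{p}$), I would aggregate
\[
\|f-P_{\mathcal{P}}f\|_{L_{\nu}^{q}(\mathbf{Q})}^{q}\lesssim\Bigl(\max_{Q\in\mathcal{P}}\alpha_{Q}\Bigr)\sum_{Q\in\mathcal{P}}\|f\|_{L^{\ell,p}(Q)}^{p}\le\max_{Q\in\mathcal{P}}\alpha_{Q},
\]
reducing the problem to producing a partition with small $\max_{Q}\alpha_{Q}$ and controlled cardinality.

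For a threshold $t>0$, I would take $\mathcal{P}_{t}$ to be the collection of maximal dyadic cubes $Q\in\mathcal{D}$ with $\alpha_{Q}\le t$. The parent $Q^{-}$ of each $Q\in\mathcal{P}_{t}$ then satisfies $\alpha_{Q^{-}}>t$, so for every $s>0$,
\[
t^{s}\,|\mathcal{P}_{t}|\;\le\;\sum_{Q\in\mathcal{P}_{t}}\alpha_{Q^{-}}^{s}\;\le\;2^{m}\sum_{k\ge 0}2^{-k\varrho s}\sum_{C\in\mathcal{D}_{k}}\nu(C)^{s}.
\]
For any $s>s_{\varrho}$ with $\beta_{\nu}(s)-\varrho s<0$ strictly, the inner sum is bounded by $2^{k(\beta_{\nu}(s)+\delta)}$ for arbitrarily small $\delta>0$ at large $k$, making the outer series geometric and yielding $|\mathcal{P}_{t}|=O_{s}(t^{-s})$. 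Since the dimension of the associated piecewise-polynomial space is a constant multiple of $|\mathcal{P}_{t}|$, pairing $n\asymp t^{-s}$ gives
\[
d_{n}(\mathscr{S}W_{p}^{\ell},L_{\nu}^{q})\lesssim t^{1/q}\lesssim n^{-1/(qs)},
\]
so $\overline{\mathbf{ord}}\le -1/(qs)$ for every such $s$; letting $s\downarrow s_{\varrho}$ completes the argument.

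The essential difficulty is aligning the two scales in the middle step: both the local error and the cardinality must be driven by the \emph{same} geometric weight $\alpha_{Q}=\nu(Q)2^{-n\varrho}$, which is exactly why $\varrho$ (and not some other exponent) enters the $L^{q}$-spectrum condition and explains the appearance of $s_{\varrho}$. A minor technical point is the need for the strict inequality $\beta_{\nu}(s)-\varrho s<0$ when unwinding the definition of $\beta_{\nu}$; this is available for all $s>s_{\varrho}$ because $\beta_{\nu}$ may be assumed non-increasing on $(0,\infty)$ after normalising $\nu$ to a probability measure, so that subtracting $\varrho s$ renders $\beta_{\nu}-\varrho\cdot\mathrm{id}$ strictly decreasing.
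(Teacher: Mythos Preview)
Your proposal is correct and follows essentially the same route as the paper: the same Bramble--Hilbert local estimate (the paper's Lemma~\ref{lem:_approxSupNorm}), the same aggregation over a partition yielding the factor $\bigl(\max_{Q}\nu(Q)\Lambda(Q)^{\varrho/m}\bigr)^{1/q}$ (the paper's Proposition~\ref{prop:_PiecewiseApprox}), and the same adaptive dyadic partition whose cardinality is bounded via the $L^{q}$-spectrum (the paper's Lemma~\ref{lem:_EstimateGamma_n}). The only organisational difference is that the paper routes the last two steps through the auxiliary optimal-partition quantities $\gamma_{a,n}$ and the partition entropy $h_{a}$ (Propositions~\ref{prop:_Elementary_Lem} and~\ref{prop:_CoreResult}), whereas you merge them into a single direct estimate; the substance is the same.
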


\begin{rem}
\label{rem1.2}
Note that
\begin{equation*}
-\frac{1}{q\cdot s_{\varrho }}\leq -
\frac{\varrho }{q\overline{\dim}_{M}\left (\nu \right )}-\frac{1}{q}
\leq \frac{1}{p}-\frac{1}{q}-\frac{\ell}{m}<0,
\end{equation*}
and that $-1/\left (q\cdot s_{\varrho }\right )=-\ell /m+1/p-1/q$ if and
only if $\beta _{\nu}(\text{s})=m\left (1-s\right )$ for some and hence
for all $s\in \left (0,1\right )$. These claims follow readily from the
convexity of $\beta _{\nu}$ by observing that for all
$s\in \left [0,1\right ]$ we have
$\beta _{\nu}(s)\leq \beta _{\nu}(0)\left (1-s\right )\leq m(1-s)$.

If $\nu $ has a non-trivial absolutely continuous part with respect to
Lebesgue, then an application of Jensen inequality guarantees
$\beta _{\nu}(s)=m\left (1-s\right )$, for all
$s\in \left (0,1\right )$. Hence, we gain a new perspective on the estimate
in \textup{\reftext{(\ref{eq:BirmanSolomanjakOld})}} in terms of the $L^{q}$-spectrum.
Namely, the intersection with the line through the origin with slope
$\varrho $ and $s\mapsto m(1-s)$ is given by $m/(m+\varrho )$, which leads
to the general upper bound $-\ell /m+1/p-1/q$ as obtained in
\citep{MR0217487} (for an illustration of this observation see \reftext{Fig.~\ref{fig:Moment-generating-function}} on page
\pageref{fig:Moment-generating-function}). Consequently, whenever
$\beta _{\nu}(s)<m\left (1-s\right )$, for some
$s\in \left (0,1\right )$, \reftext{Theorem~\ref{thm:Estimation-n-Diameter}} improves
the classical result of \citep{MR0217487} and
\citep[Theorem 5.1]{Borzov1971}. Indeed, a strict inequality occurs for
many singular measures, for example if
$\overline{\dim}_{M}\left (\nu \right )<m$. Roughly speaking, the more
nonuniform the mass of $\nu $ is distributed compared to the Lebesgue measure,
the faster
$\left (d_{n}\left (\mathscr{S}W_{p}^{\ell},L_{\nu}^{q}\right )
\right )_{n}$ decreases.%
\begin{figure}
\center{\begin{tikzpicture}[scale=1, every node/.style={transform shape},line cap=round,line join=round,>=triangle 45,x=1cm,y=1cm] \begin{axis}[ x=3.7cm,y=2.3cm, axis lines=middle, axis line style={very thick},ymajorgrids=false, xmajorgrids=false, grid style={thick,densely dotted,black!20}, xlabel= {$s$}, ylabel= {\;$\beta_\nu (s)$}, xmin=-0.4 , xmax=1.5 , ymin=-0.3, ymax=3.2,x tick style={color=black}, xtick={0, .425,0.6,1},xticklabels = {0,$s_\varrho$,$\frac{m}{m+\varrho}$,1},  ytick={0,1, 2,3},yticklabels = {0,1, $\overline{\dim}_M(\nu)$ ,3}] \clip(-0.5,-0.3) rectangle (4,4);
\draw[line width=1pt,smooth,samples=180,domain=-0.3:3.4] plot(\x,{log10(0.001^((\x))+0.28^((\x))+0.06^((\x))+0.659^((\x)))/log10(2)});
\draw [line width=01pt,dotted, domain=-0.05 :1.3] plot(\x,{3*(1-\x)});
\draw [line width=01pt,dashed, domain=-0.15 :1.3] plot(\x,{2*\x});

\node[circle,draw] (c) at (2.48 ,0 ){\,};

\draw [line width=.7pt,dotted, gray] (0.425 ,0.)--(0.425,1);
\draw [line width=.7pt,dotted, gray] (0.6  ,0 )-- (0.6,1.5);
\draw (1 ,1.91) node[anchor=north west] {{$s\mapsto\rho \cdot s$}};
\end{axis}
\end{tikzpicture}}
\caption{For $m=3$ the solid line illustrates the $L^{q}$-spectrum $\beta
_{\nu}$ for the self-similar measure $\nu $ supported on the\emph{
Sierpi\'{n}ski tetraeder} with all four contraction ratios equal $1/2$ and with
probability vector $\left (0.659,0.28,0.001,0.06\right )$; $\beta _{\nu}\left
(0\right )=\overline{\dim}_{M}\left (\nu \right )=2$. For $\varrho =2$ (slope of
the dashed line) the intersection of the spectrum and the dashed line determines
$s_{\varrho }$. The dotted line $s\protect\mapsto 3\left (1-s\right )$, which
coincides with the graph of $\beta _{\Lambda |_{\textbf{Q}}}$, intersects the
dashed line in $m/\left (m+\varrho \right )$ giving the upper
bound for $s_{\varrho }$.}
\label{fig:Moment-generating-function}
\end{figure}
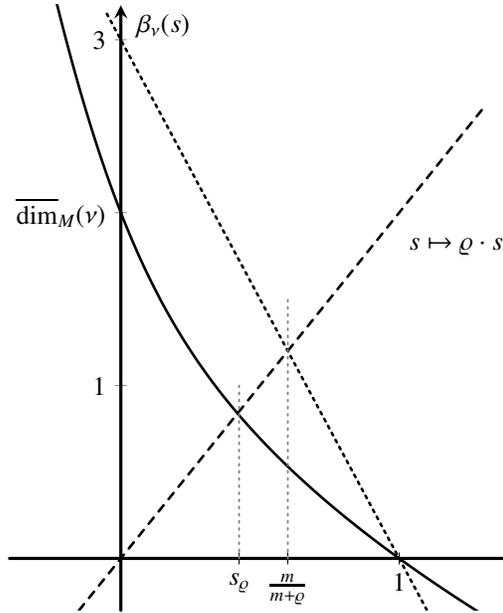%
\end{rem}

As a direct application of our result we consider polyharmonic operators,
i.e. we restrict to the Hilbert spaces setting $p=q=2$,
$H^{\ell} :=  W_{2}^{\ell}(\textbf{Q})$ and
$H_{0}^{\ell} :=  W_{0,2}^{\ell}(\textbf{Q})$: By \reftext{Theorem~\ref{thm:MainPolyharm_n-diameter}},
$d_{n-1}\left (\mathscr{S}H_{0}^{\ell},L_{\nu}^{2}\right )$ can be identified
with the square root of the $n$-th eigenvalue of the associated polyharmonic
operator with respect to $\nu $. This gives rise to improved upper bounds
of the decay rate of their eigenvalues (\reftext{Theorem~\ref{thm:Main_Polyharmonic}} in Section~\ref{subsec:General-setup-and_EVasymp}).
Further, using this connection for $\nu $ with non-trivial absolutely continuous
part with respect to Lebesgue, we deduce from
\citep[Theorem 5.1]{MR0278126} that $2s_{\varrho }=m/\ell $ and moreover,
for some explicit constant $c>0$,
\begin{equation*}
d_{n}\left (\mathscr{S}H_{0}^{\ell},L_{\nu}^{2}\right )\sim cn^{-
\ell /m}.
\end{equation*}
In Section~\ref{subsec:Application-to-self-similar} we consider self-similar
measures $\nu $ under the open set condition (cf. \reftext{Example~\ref{exa:IFS}}
for definitions); it follows from \citep{Nazarov} for $m=1$ and from \reftext{Theorem~\ref{thm:Nazarov_m>1}} for $m>1$ that
\begin{equation*}
\mathbf{ord}\left (\mathscr{S}H_{0}^{\ell},L_{\nu}^{2}\right )=-
\frac{1}{2s_{\varrho }}.
\end{equation*}
In Section~\ref{sec:Krein-Feller-operators-in} we finally consider polyharmonic
operators for the special case $\ell =m=1$. We will show that the associated
spectral problem is equivalent to the spectral problem of the classical
Kre\u{\i}n--Feller operator (see e.g.
\citep{KN21,MR2563669,MR2828537,MR3809018}). Using the superadditivity
established in \reftext{Theorem~\ref{thm:MainPolyharm_n-diameter}} (see also
\citep[Theorem 1.1]{KN2022}), we have equality in \reftext{Theorem~\ref{thm:Estimation-n-Diameter}} for \emph{all} finite Borel measure on
$\left (0,1\right )$, i.e.
\begin{equation*}
\overline{\mathbf{ord}}\left (\mathscr{S}H^{1},L_{\nu}^{2}\right )=
\overline{\mathbf{ord}}\left (\mathscr{S}H_{0}^{1},L_{\nu}^{2}\right )=-
\frac{1}{2s_{\varrho }}.
\end{equation*}

\section{Optimal partitions}
\label{sec:OptimalPartitions}

Fix $a>0$ and a Borel probability measure $\nu $ on $\textbf{Q}$. Let
$\Upsilon _{n}$ denote the set of all finite partitions consisting of at
most $n\in \mathbb N$ half-open $m$-dimensional subcubes of
$\textbf{Q}$. As in \citep{MR0217487,MR2864649}, we introduce an auxiliary
target quantity for the underlying optimisation problem given by the Kolmogorov
$n$-diameter: For $n\in \mathbb N,a>0$, and with
$\mathfrak{J}_{a}(Q) :=  \Lambda (Q)^{a}\nu (Q)$, $Q$ half-open subcube,
we let
%
\begin{align}
\gamma _{a,n} &  :=  \inf _{\Xi \in \Upsilon _{n}}\max _{Q\in
\Xi}\mathfrak{J}_{a}(Q),
\label{eq:optimierung}
\end{align}
and define the exponential growth rate of its reciprocal
\begin{equation*}
\alpha _{a} :=  \liminf _{n\rightarrow \infty}
\frac{\log \left (1/\gamma _{a,n}\right )}{\log (n)}.
\end{equation*}

\begin{rem}
\label{rem2.1}
The quantity $\gamma _{a,n}$ naturally arises in the study of approximation
order in $L_{\nu}^{2}$ of functions in $W_{p}^{\ell}(\textbf{Q})$ by piecewise
polynomial approximations (see for instance \citep{MR0278126} and \reftext{Proposition~\ref{prop:_PiecewiseApprox}})
as well as in the study of the spectral behaviour of polyharmonic operators
as defined in Section~\ref{subsec:Application-to-polyharmonic}, see also
\citep{MR0278126,MR0482138}. This common ground reveals a deep connection
between these two aspects. It is also worth pointing out that the so-called
quantization problem, that is the speed of approximation of a compactly
supported Borel probability measure by finitely supported measures (see
\citep{MR1764176} for an introduction), has also close links to the growth
rate of $n\gamma _{a,n}$. This will be subject of the forthcoming paper
\citep{KN22b}.

We make use of the fact that the asymptotic optimisation problem in
\textup{\reftext{(\ref{eq:optimierung})}} can, as a result of \reftext{Proposition~\ref{prop:_Elementary_Lem}},
be transformed into the following counting problem. Motivated by
\citep{KN2022,MR0217487,MR2083820}, we introduce follow quantities. Let
$\Pi $ denote the sets of all partitions of $\textbf{Q}$ by half-open
$m$-dimensional cubes. Then the exponential growth rate of
\begin{equation*}
\mathcal{N}_{a}\left (t\right ) :=  \inf \left \{ \card \left (P
\right ):P\in \Pi :\max _{Q\in P}\mathfrak{J}_{a}(Q)<1/t\right \} ,t>0,
\end{equation*}
given by
\begin{equation*}
h_{a} :=  \limsup _{t\to \infty}
\frac{\log \mathcal{N}_{a}\left (t\right )}{\log t},
\end{equation*}
will be called the\emph{ (upper) $\nu $-partition entropy with parameter
$a$. }Let us begin with the preparatory observation that for $a>0$, the
sequence $\left (\gamma _{a,2^{mn}}\right )$ is either strictly decreasing
or eventually constant zero.
\end{rem}

\begin{lem}
\label{lem:induction_betaN}%
For all $n\in \mathbb N$ and $a>0$, we have
$\gamma _{a,2^{m(n+1)}}\leq \frac{1}{2^{ma}}\gamma _{a,2^{mn}}$.
\end{lem}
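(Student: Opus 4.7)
The plan is to show the inequality by taking a near-optimal partition witnessing $\gamma_{a,2^{mn}}$ and refining each cube by halving each side, which multiplies the Lebesgue volume of each cube by $2^{-m}$ while only potentially decreasing the $\nu$-mass of its children, and produces a new partition whose cardinality is at most $2^m$ times the original.

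In detail, I would fix $\epsilon>0$ and pick $\Xi\in\Upsilon_{2^{mn}}$ with
\begin{equation*}
\max_{Q\in\Xi}\mathfrak{J}_{a}(Q)\leq \gamma_{a,2^{mn}}+\epsilon.
\end{equation*}
For every $Q\in\Xi$, subdivide $Q$ into the $2^m$ congruent half-open subcubes $Q_1,\dots,Q_{2^m}$ obtained by bisecting each edge. The collection $\Xi'$ of all these subcubes is still a partition of $\textbf{Q}$ by half-open cubes with edges parallel to the axes, and its cardinality satisfies $\operatorname{card}(\Xi')\leq 2^m\operatorname{card}(\Xi)\leq 2^{m(n+1)}$, so $\Xi'\in\Upsilon_{2^{m(n+1)}}$.

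For any child $Q_i$ of some $Q\in\Xi$, one has $\Lambda(Q_i)=\Lambda(Q)/2^m$ and $\nu(Q_i)\leq \nu(Q)$ (by monotonicity of the Borel measure $\nu$), whence
\begin{equation*}
\mathfrak{J}_{a}(Q_i)=\Lambda(Q_i)^{a}\nu(Q_i)\leq \frac{\Lambda(Q)^{a}}{2^{ma}}\nu(Q)=\frac{\mathfrak{J}_{a}(Q)}{2^{ma}}.
\end{equation*}
Taking the maximum over the refined partition yields
\begin{equation*}
\gamma_{a,2^{m(n+1)}}\leq \max_{Q'\in\Xi'}\mathfrak{J}_{a}(Q')\leq \frac{1}{2^{ma}}\bigl(\gamma_{a,2^{mn}}+\epsilon\bigr),
\end{equation*}
and sending $\epsilon\downarrow 0$ finishes the argument.

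There is no real obstacle here: the claim is essentially a bookkeeping consequence of the fact that uniform bisection scales Lebesgue measure by $2^{-m}$ while the $\nu$-mass can only decrease under refinement. The only mild subtlety is that $\Upsilon_n$ consists of partitions by arbitrary (not necessarily dyadic) axis-parallel half-open cubes; this is why one has to refine the chosen near-optimal partition by halving, rather than simply passing between two levels of the fixed dyadic hierarchy $\mathcal{D}_n$.
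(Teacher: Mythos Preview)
Your proof is correct and follows essentially the same approach as the paper: refine an (almost) optimal partition in $\Upsilon_{2^{mn}}$ by bisecting every edge of each cube, observe that this multiplies $\mathfrak{J}_a$ by at most $2^{-ma}$, and that the new partition lies in $\Upsilon_{2^{m(n+1)}}$. The only cosmetic difference is that the paper argues for an arbitrary $\Xi\in\Upsilon_{2^{mn}}$ and then (implicitly) passes to the infimum, whereas you use an $\epsilon$-approximation; the two are equivalent.
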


\begin{proof}
For $\Xi \in \Upsilon _{2^{mn}}$ we can divide each $Q\in \Xi $ into
$2^{m}$ disjoint, equally sized, half-open cubes. The new resulting partition
denoted by $\Xi '$ satisfies
$\card (\Xi ')\leq 2^{m\left (n+1\right )}$ and
$\Lambda \left (Q\right )^{a}/2^{ma}=\Lambda \left (Q_{i}\right )^{a}$,
for all $Q'\subset Q\in \Xi $ with $Q'\in \Xi '$. This implies
\begin{equation*}
\max _{Q\in \Xi '}\mathfrak{J}_{a}(Q)\leq \frac{1}{2^{ma}}\max _{Q
\in \Xi}\mathfrak{J}_{a}(Q).\qedhere
\end{equation*}
\end{proof}
%
\begin{prop}
\label{prop:_Elementary_Lem}%
For $a>0$ we have $h_{a}=1/\alpha _{a}$.
\end{prop}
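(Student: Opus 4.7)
The plan is to exhibit a direct duality between the two optimisation problems and then translate it into a numerical inequality between the $\liminf$ and the $\limsup$. Unpacking the definitions, one sees immediately that
\begin{equation*}
\mathcal{N}_{a}(t)\le n\iff \gamma _{a,n}<1/t,
\end{equation*}
since a partition with at most $n$ cubes achieving $\max _{Q}\mathfrak{J}_{a}(Q)<1/t$ is simultaneously a witness for $\mathcal{N}_{a}(t)\le n$ and for $\gamma _{a,n}<1/t$. The only subtlety is that $\mathcal{N}_{a}(t)$ is defined via a strict inequality whereas $\gamma _{a,n}$ is an infimum that need not be attained; but $\gamma _{a,n}<1/t$ already forces the existence of some $\Xi \in \Upsilon _{n}$ realising a value strictly below $1/t$, so the equivalence is clean.

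Given this duality, the bound $h_{a}\le 1/\alpha _{a}$ follows by transcribing the $\liminf$ statement about $\gamma _{a,n}$ into an upper bound on $\mathcal{N}_{a}(t)$. Fix $\epsilon >0$; by definition of $\alpha _{a}$, for all sufficiently large $n$ we have $\gamma _{a,n}<n^{-(\alpha _{a}-\epsilon )}$. For $t$ large, choose $n$ to be the smallest integer with $n>t^{1/(\alpha _{a}-\epsilon )}$; then $n^{-(\alpha _{a}-\epsilon )}<1/t$, and the duality yields $\mathcal{N}_{a}(t)\le n$. Dividing $\log \mathcal{N}_{a}(t)$ by $\log t$, taking $\limsup _{t\to \infty}$ and letting $\epsilon \to 0$ produces the inequality.

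For the reverse bound $h_{a}\ge 1/\alpha _{a}$, the duality is applied in contrapositive form along a subsequence witnessing the $\liminf$. Fix $\epsilon >0$; there exists a sequence $n_{k}\to \infty $ with $\gamma _{a,n_{k}}\ge n_{k}^{-(\alpha _{a}+\epsilon )}$. Set $t_{k}:=n_{k}^{\alpha _{a}+\epsilon}$, so that $\gamma _{a,n_{k}}\ge 1/t_{k}$. Were $\mathcal{N}_{a}(t_{k})\le n_{k}$, the equivalence above would give $\gamma _{a,n_{k}}<1/t_{k}$, a contradiction; hence $\mathcal{N}_{a}(t_{k})>n_{k}$, and consequently $\log \mathcal{N}_{a}(t_{k})/\log t_{k}>1/(\alpha _{a}+\epsilon )$. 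Taking $\limsup _{k\to \infty}$ on the left and then $\epsilon \to 0$ completes the argument. The degenerate cases $\alpha _{a}\in \{0,\infty \}$ are handled analogously by replacing the $\epsilon$-perturbed exponent with an arbitrarily large or small constant, and by noting that \reftext{Lemma~\ref{lem:induction_betaN}} rules out $\gamma _{a,n}=0$ for $\nu $ a probability measure. The main point to watch is maintaining the strict/non-strict distinction when swapping the two formulations, but the $\epsilon$-padding in the final limits absorbs it safely, so no genuine obstacle arises.
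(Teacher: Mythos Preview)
Your argument is correct and rests on the same duality $\mathcal{N}_{a}(t)\le n\iff\gamma_{a,n}<1/t$ that the paper exploits (the paper writes it as $\mathcal{N}_{a}(1/\varepsilon)=\inf\{n:\gamma_{a,n}<\varepsilon\}$). The paper's execution differs in that it passes to the dyadic subsequence $(\gamma_{a,2^{mn}})_{n}$, uses its strict monotonicity from \reftext{Lemma~\ref{lem:induction_betaN}}, and then cites \cite[Lemma 2.2]{MR2083820} for the inversion step; your self-contained two-sided $\epsilon$-argument is arguably more direct and avoids the external reference.

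One small slip in your closing remark: \reftext{Lemma~\ref{lem:induction_betaN}} does \emph{not} rule out $\gamma_{a,n}=0$ (take $m=1$, $\nu=\delta_{1/2}$, $n=3$; the paper itself notes the sequence can be ``eventually constant zero''), and the case $\alpha_{a}=0$ never arises because the same lemma already forces $\alpha_{a}\ge a>0$. Neither point damages your proof: in the upper bound the inequality $\gamma_{a,n}<n^{-(\alpha_{a}-\epsilon)}$ holds trivially when $\gamma_{a,n}=0$, and in the lower bound your subsequence $n_{k}$ is chosen so that $\log(1/\gamma_{a,n_{k}})/\log n_{k}<\alpha_{a}+\epsilon$, which automatically excludes $\gamma_{a,n_{k}}=0$. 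The case $\alpha_{a}=\infty$ then follows exactly as you indicate, replacing $\alpha_{a}-\epsilon$ by an arbitrary $M>0$.
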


\begin{proof}
The proof follows along the same lines as the proof of the elementary Lemma
\citep[Lemma 2.2]{MR2083820}. First note that for $0<\varepsilon <1$ we
have
$\mathcal{N}_{a}\left (1/\varepsilon \right )=\inf \left \{ n\in
\mathbb N\mid \gamma _{a,n}<\varepsilon \right \} $. By \reftext{Lemma~\ref{lem:induction_betaN}} we have that
$\left (\gamma _{a,2^{mn}}\right )_{n}$ is a strictly decreasing null sequence
or eventually constant zero. The latter case is immediate. For the first
case the strict monotonicity gives as in \citep[Lemma 2.2]{MR2083820} for
$B\left (\varepsilon \right ) :=  \inf \left \{ n\in \mathbb N
\mid \gamma _{a,2^{mn}}<\varepsilon \right \} $
\begin{equation*}
\frac{1}{\alpha _{a}}=\limsup _{k\to \infty}
\frac{-\log k}{\log \gamma _{a,k}}=\limsup _{n\to \infty}
\frac{mn\log 2}{-\log \gamma _{a,2^{mn}}}=\limsup _{\varepsilon
\searrow 0}
\frac{B\left (\varepsilon \right )m\log 2}{-\log \varepsilon }=h_{a},
\end{equation*}
where the second equality follows by squeezing
$2^{m(n-1)}<k\leq 2^{mn}$, and the last equality by noting that
$2^{m\left (B\left (\varepsilon \right )-1\right )}\leq \mathcal{N}_{a}
\left (1/\varepsilon \right )\leq 2^{mB\left (\varepsilon \right )}$.
\end{proof}

\subsection{The $L^{q}$-spectrum and optimal partitions}
\label{sec2.1}

For $b>0$ and $n\in \mathbb N$, by the monotonicity and continuity of
$\beta _{n}^{\nu}$ there exists a unique number
$s_{n,b}\in \left [0,1\right ]$ such that
\begin{equation*}
\beta _{n}^{\nu}\left (s_{n,b}\right )=b\cdot s_{n,b}.
\end{equation*}

\begin{lem}
\label{lem2.4}
For all $b>0$,
\begin{equation*}
s_{b}=\limsup _{n\rightarrow \infty}s_{n,b}
\end{equation*}
and if $s_{b}>0$, then
$\beta _{\nu}\left (s_{b}\right )=b\cdot s_{b}$.
\end{lem}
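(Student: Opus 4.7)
The plan is to sandwich $\limsup_n s_{n,b}$ between $s_b$ from above and below, using (i) the monotonicity of each $\beta_n^\nu$ on $[0,1]$ (which is immediate from $\nu(C)\le 1$, making $s\mapsto \nu(C)^s$ non-increasing) and (ii) the convexity of $\beta_\nu$ as a limsup of convex functions, hence its continuity on $(0,\infty)$. Since $\beta_\nu(1)=0\le b$ and, by the definition of the infimum, there is a sequence $t_k\downarrow s_b$ with $\beta_\nu(t_k)\le b t_k$, continuity yields $\beta_\nu(s_b)\le b s_b$; combining this with $\beta_\nu(1)\le b$ and convexity gives $\beta_\nu(s)\le bs$ for all $s\in[s_b,1]$. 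Conversely, the infimum definition forces the strict inequality $\beta_\nu(s)>bs$ for every $s\in(0,s_b)$.

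For the upper bound $\limsup_n s_{n,b}\le s_b$, I would fix $s\in(s_b,1]$ and insert an intermediate point $s'\in(s_b,s)$. Then $\limsup_n \beta_n^\nu(s')=\beta_\nu(s')\le b s'<bs$, so $\beta_n^\nu(s')<bs$ for all $n$ sufficiently large. Monotonicity of $\beta_n^\nu$ then gives $\beta_n^\nu(s)\le\beta_n^\nu(s')<bs$ for such $n$, and since $\beta_n^\nu(s_{n,b})=b s_{n,b}$, this forces $s_{n,b}<s$ (otherwise $s_{n,b}\ge s$ would yield $\beta_n^\nu(s)\ge\beta_n^\nu(s_{n,b})\ge bs$). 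Letting $s\downarrow s_b$ closes this direction.

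For the reverse inequality, fix $s\in(0,s_b)$; then $\beta_\nu(s)>bs$, so by the definition of $\limsup$ some subsequence $(n_k)$ satisfies $\beta_{n_k}^\nu(s)>bs$. If $s_{n_k,b}<s$ held for some $k$ then monotonicity would give $\beta_{n_k}^\nu(s)\le \beta_{n_k}^\nu(s_{n_k,b})=b s_{n_k,b}<bs$, contradicting the subsequence property. Hence $s_{n_k,b}\ge s$ for all $k$, which delivers $\limsup_n s_{n,b}\ge s$, and sending $s\uparrow s_b$ finishes the first claim. For the second claim, when $s_b>0$ the inequality $\beta_\nu(s_b)\le b s_b$ is already established; the reverse is obtained by picking $t_k\uparrow s_b$, using $\beta_\nu(t_k)>b t_k$, and passing to the limit via continuity of $\beta_\nu$ at $s_b\in(0,1)$. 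The main technical subtlety is the asymmetry between the two bounds: turning the $\limsup$ estimate $\beta_\nu(s')\le bs'$ into the eventual pointwise estimate $\beta_n^\nu(s')<bs$ is precisely what the auxiliary point $s'$ placed strictly between $s_b$ and $s$ is introduced for, and without this wedge one cannot conclude that $s_{n,b}<s$ holds for all large $n$.
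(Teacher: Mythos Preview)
Your argument is correct and uses the same ingredients as the paper's proof: the monotonicity of each $\beta_n^\nu$ on $[0,1]$, the defining relation $\beta_n^\nu(s_{n,b})=b\,s_{n,b}$, and the convexity/continuity of $\beta_\nu$ on $(0,1)$. The only organizational difference is that the paper sets $s_*\coloneqq\limsup_n s_{n,b}$ and establishes inequalities for $\beta_\nu(s_*\pm\varepsilon)$ to conclude $s_b=s_*$, whereas you fix points on either side of $s_b$ and push the comparison back to $s_{n,b}$; these are dual formulations of the same idea, and your introduction of the auxiliary point $s'\in(s_b,s)$ to convert the limsup inequality $\beta_\nu(s')\le bs'$ into an eventual strict bound $\beta_n^\nu(s')<bs$ mirrors exactly the role that $\varepsilon$ plays in the paper's argument.

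One small caveat: your route to ``$\beta_\nu(s)\le bs$ on $[s_b,1]$'' passes through the intermediate claim $\beta_\nu(s_b)\le b s_b$, obtained by continuity at $s_b$. That step is not available when $s_b=0$ (compare Remark~\ref{rem2.5}, where $\beta_\nu(0)=1/2>0$). However, your upper-bound argument only ever uses $\beta_\nu(s')\le bs'$ for $s'\in(s_b,1]$, and this follows directly from the sequence $t_k\downarrow s_b$ with $\beta_\nu(t_k)\le b t_k$ together with $\beta_\nu(1)\le b$ and convexity of $s\mapsto\beta_\nu(s)-bs$, without ever evaluating at $s_b$. So the issue is purely cosmetic; the paper avoids it by treating the case $s_*=0$ separately.
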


\begin{rem}
\label{rem2.5}
The assumption $s_{b}>0$ cannot be dropped to guarantee the equality
$\beta _{\nu}\left (s_{b}\right )=b\cdot s_{b}$. In fact, for the finite
measure $\eta  :=  \sum p_{k}\delta _{x_{k}}$ with
$p_{k} :=  \mathrm{e}^{-k}$ and $x_{k} :=  1/k$ where
$\delta _{x}$ denotes the Dirac measure on $x$, we have
$\overline{\dim}_{M}(\eta )=1/2$,
\begin{equation*}
\beta _{\nu}(s)=
\begin{cases}
1/2 & ,s=0,
\\
0 & ,s>0
\end{cases}
\end{equation*}
and therefore $\beta _{\nu}\left (0\right )=1/2\neq b\cdot s_{b}=0$.
\end{rem}

\begin{proof}
Recall
$s_{b}=\inf \left \{ s>0:\beta _{\nu}\left (s\right )-b\cdot s\leq 0
\right \} $ and define
$s_{*} :=  \limsup _{n\rightarrow \infty}s_{n,b}$. Then for every
$\varepsilon >0$ and $n$ large enough we have
$s_{n,b}\leq s_{*}+\varepsilon $ and consequently
$\beta _{n}^{\nu}\left (s_{*}+\varepsilon \right )\leq \beta _{n}^{
\nu}\left (s_{n,b}\right )$. This implies
\begin{equation*}
\beta _{\nu}\left (s_{*}+\varepsilon \right )\leq b\cdot s_{*}.
\end{equation*}
If $s_{*}=0$, then $\beta _{\nu}\left (s\right )=0$ for all $s>0$, which
shows $s_{b}=0$. Assuming $s_{*}>0$, the continuity of
$\beta _{\nu}$ in $(0,1)$ gives
$\beta _{\nu}\left (s_{*}\right )\leq b\cdot s_{*}$. Let
$\left (n_{k}\right )_{k\in \mathbb N}$ be such that
$\lim _{k}s_{n_{k},b}=s_{*}$. Then for all
$\eta \in \left (0,s_{*}/2\right )$ and for $k$ large we have
$s_{n_{k},b}\geq s_{*}-\eta $, which gives
$\beta _{n_{k}}^{\nu}\left (s_{n_{k},b}\right )\leq \beta _{n_{k}}^{
\nu}\left (s_{*}-\eta \right )$. This implies
\begin{equation*}
bs_{*}\leq \limsup _{k}\beta _{n_{k}}^{\nu}\left (s_{*}-\eta \right )
\leq \beta _{\nu}\left (s_{*}-\eta \right ).
\end{equation*}
The continuity of $\beta _{\nu}$ in (0,1) gives
$bs_{*}=\beta _{\nu}\left (s_{*}\right )$ and therefore
$s_{b}=s_{*}$.
\end{proof}
The following lemma is the key to estimate $h_{a}$ in terms of the
$L^{q}$-spectrum $\beta _{\nu}$. To this end, for a given threshold
$t\in \left (0,\mathfrak{J}_{a}(\textbf{Q})\right )$, we will construct
partitions by dyadic cubes as a function of $t$ via an
\emph{adaptive approximation algorithm} in the sense of \citep{MR939183} (see
also \citep{MR1781213}) as follows. We say $Q\in \mathcal{D}$ is
\emph{bad, }if $\mathfrak{J}_{a}\left (Q\right )\geq t$, otherwise\emph{
}we call $Q$ \emph{good. }The goal is to construct a partition of
$\textbf{Q}$ with minimal cardinality, denoted by $P_{a,t}$, consisting
of elements of half-open dyadic cubes that are good. In the first step,
we divide $\textbf{Q}$ into $2^{d}$ half open cubes of equal size and move
good cubes among them to $P_{a,t}$. Now, repeat this procedure with respect
to each of the remaining bad cubes until no bad cubes are left. Since for
each $Q\in P_{a,t}$, we have
$\mathfrak{J}_{a}(Q)<t\leq \mathfrak{J}_{a}(Q')$, where $Q'$ denotes the
unique dyadic cube such that $Q'$ is the predecessor of $Q$. This ensures
that the procedure terminates after finitely many steps. The resulting
finite partition $P_{a,t}$ is optimal (in the sense of minimizing the cardinality)
among all partitions $P$ by half-open dyadic cubes fulfilling
$\max _{Q\in P}\mathfrak{J}_{a}(Q)<t$. This indicates that
$\card P_{a,t}$ provides a good approximation of
$\mathcal{N}_{a}\left (1/t\right )$. Now, the remaining task is to connect
the asymptotic behaviour of $\card (P_{a,t})$ with the $L^{q}$-spectrum
$\beta _{\nu}$. Motivated by ideas from large derivation theory and the
thermodynamic formalism \citep{MR2129258} we are able to bound
$h_{a}$ from above by $s_{am}$, namely, by comparing the cardinality of
$P_{a,t}$ and
$Q_{a,t} :=  \left \{ Q\in \mathcal{D}:\mathfrak{J}_{a}(Q)\geq t
\right \} $. This will be the key idea in the proof of \reftext{Lemma~\ref{lem:_EstimateGamma_n}}.
%
\begin{lem}
\label{lem:_EstimateGamma_n}%
For all $a>0$, we have
\begin{equation*}
h_{a}\leq s_{am}.
\end{equation*}
\end{lem}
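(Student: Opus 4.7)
The plan is to reduce everything to the counting quantity $\card Q_{a,t}$ and then apply a Markov-type inequality to leverage the definition of $\beta_{\nu}$. First I would bound $\card P_{a,t}$ by $\card Q_{a,t}$: by construction each $Q\in P_{a,t}$ has a unique dyadic predecessor $Q'$ which is bad, hence $Q'\in Q_{a,t}$, and since every $Q'\in\mathcal{D}$ has at most $2^m$ children, the predecessor map is at most $2^m$-to-one. Therefore $\card P_{a,t}\leq 2^m\card Q_{a,t}$, and since $P_{a,t}$ is a feasible partition for the infimum defining $\mathcal{N}_a(1/t)$, we obtain $\mathcal{N}_a(1/t)\leq 2^m\card Q_{a,t}$.

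Next I would estimate $\card Q_{a,t}$ level by level. For $Q\in\mathcal{D}_n$ we have $\Lambda(Q)=2^{-mn}$, so the condition $\mathfrak{J}_a(Q)\geq t$ reads $\nu(Q)\geq t\cdot 2^{mna}$. Since $\nu$ is a probability measure, this forces $n\leq N(t):=\lfloor\log(1/t)/(ma\log 2)\rfloor$, truncating the sum to finitely many levels. For any $s>0$ a Markov inequality yields
\begin{equation*}
\#\{Q\in\mathcal{D}_n:\nu(Q)\geq t\cdot 2^{mna}\}\leq t^{-s}\,2^{-mnas}\sum_{C\in\mathcal{D}_n}\nu(C)^s=t^{-s}\,2^{n(\beta_n^{\nu}(s)-mas)}.
\end{equation*}

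Now fix any $s>s_{am}$, so that $\beta_{\nu}(s)<ma\cdot s$. Choose $\epsilon>0$ with $\delta:=mas-\beta_{\nu}(s)-\epsilon>0$. By definition of the $\limsup$ there exists $N_0$ such that $\beta_n^{\nu}(s)\leq\beta_{\nu}(s)+\epsilon$ for all $n\geq N_0$, giving the level-wise bound $t^{-s}\,2^{-n\delta}$. Summing the contributions from $n\geq N_0$ yields a geometric series, while the finitely many levels $n<N_0$ contribute at most a constant (each $\mathcal{D}_n$ has cardinality $\leq 2^{mn}$). Hence $\card Q_{a,t}\leq C_1+C_2\,t^{-s}$ for constants independent of $t$, and consequently $\mathcal{N}_a(1/t)\leq C(1+t^{-s})$. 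Setting $\tau=1/t$ and taking $\limsup_{\tau\to\infty}\log\mathcal{N}_a(\tau)/\log\tau$ gives $h_a\leq s$, and letting $s\searrow s_{am}$ finishes the proof.

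The only mildly delicate point is the main obstacle: the passage from the pointwise Markov bound at level $n$ to a uniform control in $t$, because the $\limsup$ in the definition of $\beta_{\nu}$ allows $\beta_n^{\nu}(s)$ to exceed $\beta_{\nu}(s)$ for small $n$. Absorbing these finitely many low levels into an additive constant (with the help of the trivial bound $\card\mathcal{D}_n\leq 2^{mn}$) and the high levels into a convergent geometric series built from the slack $\delta>0$ is what makes the estimate uniform. Everything else is bookkeeping.
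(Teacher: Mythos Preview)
Your proof is correct and follows essentially the same strategy as the paper: bound $\mathcal{N}_a(1/t)\leq\card P_{a,t}\leq 2^m\card Q_{a,t}$, then control $\card Q_{a,t}$ level by level via a Markov inequality, splitting into finitely many low levels (trivial bound) and high levels (geometric series). The only noteworthy difference is technical: the paper applies Markov with the level-dependent exponent $s_{k-1,am}+\varepsilon$ and invokes Lemma~\ref{lem2.4} ($s_{am}=\limsup_n s_{n,am}$), whereas you use a single fixed exponent $s>s_{am}$ together with $\beta_\nu(s)=\limsup_n\beta_n^\nu(s)$ directly. Your variant is slightly more economical in that it bypasses Lemma~\ref{lem2.4}, but the two arguments are otherwise the same.
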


\begin{proof}
Without loss of generality, we assume that $\nu $ is a probability measure.
For $t\in \left (0,1\right )$,
\begin{equation*}
P_{a,t}=\left \{ C\in \mathcal{D}:\mathfrak{J}_{a}\left (C\right )<t
\,\&\,\exists C'\in \mathcal{D}_{\left |\log _{2}\left (\Lambda (C)
\right )\right |/m-1}:C'\supset C\,\&\,\mathfrak{J}_{a}\left (C'
\right )\geq t\right \}
\end{equation*}
is a partition of $\textbf{Q}$ by dyadic cubes. With $Q_{a,t}$ as defined
above, we note that for $C\in P_{a,t}$ there is exactly one
$C'\in Q_{a,t}\cap \mathcal{D}_{\left |\log _{2}\left (\Lambda (C)
\right )\right |/m-1}$ with $C\subset C'$ and for each
$C'\in Q_{a,t}\cap \mathcal{D}_{\left |\log _{2}\left (\Lambda \left (C
\right )\right )\right |/m-1}$ there are at most $2^{m}$ elements of
$P_{a,t}\cap \mathcal{D}_{\left |\log _{2}\left (\Lambda (C)\right )
\right |/m}$ which are subsets of $C'$. Hence,
\begin{equation*}
\card P_{a,t}\leq 2^{m}\card Q_{a,t}.
\end{equation*}
By the definition of $s_{n,am}$, we have
\begin{equation*}
\sum _{C\in \mathcal{D}_{n}}\nu (C)^{s_{n,am}}=2^{am\cdot s_{n,am}}.
\end{equation*}
For $s>s_{am}$ and $\varepsilon  :=  \left (s-s_{am}\right )/2$, there
exists $K\in \mathbb N$ such that for all $k\geq K$ we have
$s_{am}+\varepsilon >s_{k,am}$. This gives
$s-\varepsilon =s_{am}+(s-s_{am})/2>s_{k,am}$ and we obtain for all
$0<t<1$,
\begin{align*}
t^{s}\card P_{a,t} & =\sum _{k=1}^{\infty}\sum _{C\in P_{a,t}\cap
\mathcal{D}_{k}}t^{s}\leq 2^{m}\sum _{k=1}^{K}\sum _{C\in Q_{a,t}
\cap \mathcal{D}_{k-1}}1
\\
&\quad {}+\sum _{k=K+1}^{\infty}2^{m}t^{s}\sum _{C
\in Q_{a,t}\cap \mathcal{D}_{k-1}}
\frac{\left (\mathfrak{J}_{a}\left (C\right )\right )^{s_{k-1,am}+\varepsilon}}{t^{s_{k-1,am}+\varepsilon}}
\\
& \leq 2^{m}\sum _{k=1}^{K}\sum _{C\in Q_{a,t}\cap \mathcal{D}_{k-1}}1
\\
&\quad {}+
\sum _{k=K+1}^{\infty}2^{m}t^{s}2^{-a(k-1)m\varepsilon}
\frac{2^{-am(k-1)s_{k-1,am}}2^{(k-1)am\cdot s_{k-1,am}}}{t^{s_{k-1,am}+\varepsilon}}
\\
& =2^{m}\sum _{k=1}^{K}\sum _{C\in Q_{a,t}\cap \mathcal{D}_{k-1}}1+
\sum _{k=K+1}^{\infty}2^{m(1-a(k-1)\varepsilon )}t^{s-\varepsilon -s_{k-1,am}}
\\
& \leq 2^{m}\sum _{k=0}^{K-1}\sum _{C\in \mathcal{D}_{k}}1+2^{m(1+a
\varepsilon )}\sum _{k=K+1}^{\infty}2^{-akm\varepsilon}<\infty .
\end{align*}
This implies
\begin{equation*}
\limsup _{t\searrow 0}
\frac{\log \left (\card P_{a,t}\right )}{-\log (t)}\leq s
\end{equation*}
and since $s>s_{am}$ was arbitrary,
\begin{equation*}
h_{a}=\limsup _{t\searrow 0}
\frac{\log \left (\mathcal{N}_{a}\left (1/t\right )\right )}{-\log (t)}
\leq \limsup _{t\searrow 0}
\frac{\log \left (\card P_{a,t}\right )}{-\log (t)}\leq s_{am}.\qedhere
\end{equation*}
\end{proof}
Now, we are in the position state one of our core results needed in the
proof of \reftext{Theorem~\ref{thm:Estimation-n-Diameter}}.
%
\begin{prop}
\label{prop:_CoreResult}%
For all $a>0$,
\begin{equation*}
\limsup _{n\rightarrow \infty}
\frac{\log \left (\gamma _{a,n}\right )}{\log (n)}=-\alpha _{a}=-
\frac{1}{h_{a}}\leq -\frac{1}{s_{am}}.
\end{equation*}
\end{prop}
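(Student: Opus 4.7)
The plan is to chain together the two previously established auxiliary results, namely \reftext{Proposition~\ref{prop:_Elementary_Lem}} ($h_a = 1/\alpha_a$) and \reftext{Lemma~\ref{lem:_EstimateGamma_n}} ($h_a \le s_{am}$), after first reducing the statement to its definitional content.

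First I would unfold the definition of $\alpha_a$: since $\alpha_a = \liminf_{n\to\infty}\log(1/\gamma_{a,n})/\log n$ and $\log(1/\gamma_{a,n}) = -\log\gamma_{a,n}$, passing a minus sign through the liminf turns it into a limsup, so that
\begin{equation*}
\limsup_{n\to\infty}\frac{\log\gamma_{a,n}}{\log n} = -\liminf_{n\to\infty}\frac{\log(1/\gamma_{a,n})}{\log n} = -\alpha_a,
\end{equation*}
which establishes the first equality. This step is purely symbolic and, strictly speaking, one must only ensure the expression $\log\gamma_{a,n}/\log n$ is well defined; by \reftext{Lemma~\ref{lem:induction_betaN}} the subsequence $(\gamma_{a,2^{mn}})$ is either strictly decreasing to $0$ or eventually $0$, and in the latter degenerate case both sides of the claim equal $-\infty$ and the remaining assertions are trivially true, so I may assume henceforth that $\gamma_{a,n}>0$ for all $n$.

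Next I would invoke \reftext{Proposition~\ref{prop:_Elementary_Lem}}, which gives $h_a = 1/\alpha_a$, to rewrite $-\alpha_a = -1/h_a$, yielding the middle equality. Finally, the inequality $-1/h_a \le -1/s_{am}$ is obtained from \reftext{Lemma~\ref{lem:_EstimateGamma_n}}: since $h_a \le s_{am}$ and both quantities are non-negative, taking reciprocals reverses the inequality, and prefixing a minus sign flips it back, producing
\begin{equation*}
-\frac{1}{h_a} \le -\frac{1}{s_{am}}.
\end{equation*}

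The only genuine subtlety I foresee is handling the boundary behaviour. If $s_{am}=0$, then $-1/s_{am}$ is interpreted as $-\infty$ and the bound holds vacuously; if $h_a = 0$, then by \reftext{Proposition~\ref{prop:_Elementary_Lem}} we have $\alpha_a = \infty$, forcing $\limsup_n \log\gamma_{a,n}/\log n = -\infty$, consistent with the bound. Apart from these easy case distinctions, the proof is a one-line concatenation of the two prior results, and I would present it as such.
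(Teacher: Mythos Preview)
Your proposal is correct and follows exactly the same route as the paper: the paper's proof reads ``This follows immediately from \reftext{Proposition~\ref{prop:_Elementary_Lem}} and \reftext{Lemma~\ref{lem:_EstimateGamma_n}}'', and you have simply spelled out that concatenation together with the boundary cases.
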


\begin{proof}
This follows immediately from \reftext{Proposition~\ref{prop:_Elementary_Lem}} and
\reftext{Lemma~\ref{lem:_EstimateGamma_n}}.
\end{proof}
%
\begin{rem}
\label{rem2.8}
In the case $m=a=1$ we have shown in \citep{KN2022} that even equality
holds, i.e.
\begin{equation*}
\limsup _{n\rightarrow \infty}
\frac{\log \left (\gamma _{1,n}\right )}{\log (n)}=-\frac{1}{s_{1}}.
\end{equation*}
\end{rem}

\section{Approximation order}
\label{sec3}

\subsection{Piecewise polynomial approximations of functions of the Sobolev space
in the metric of $L_{\nu}^{q}$}
\label{subsec:Theorems-on-approximation}

In this section we recall some results of
\citep[\S 3]{MR0217487}
and \citep{MR0482138} which will be important for our applications to
$n$-diameters and polyharmonic operators. Let $Q\subset \textbf{Q}$ denote
a cube. As pointed out in the introduction our standing assumption
$\ell p/m>1$ ensures that $W_{p}^{\ell}\left (Q\right )$ is compactly embedded
in
$\left (\mathcal{C}(\overline{Q}),\left \Vert \,\cdot \,\right \Vert _{
\mathcal{C}(\overline{Q})}\right )$. In the case $\ell p/m\leq 1$ the situation
becomes more involved; in general, we have no compact embedding from
$W_{p}^{\ell}\left (Q\right )$ into $L_{\nu}^{2}(Q)$ (e.g.
\citep{MR2261337,MR1338787,MR817985}). Further, without loss of generality
we assume that $\nu $ is Borel probability measure on $\textbf{Q}$. For
every $u\in W_{p}^{\ell}\left (Q\right )$, we associate a polynomial
$r\in \mathbb{R}\left [x_{1},\ldots ,x_{m}\right ]$ of degree at most
$\ell -1$  satisfying the conditions
%
\begin{equation}
\int _{Q}x^{k}r(x)\;\mathrm d\Lambda (x)=\int _{Q}x^{k}u(x)\;
\mathrm d\Lambda (x)\,\,\text{for all}\,\,|k|\leq \ell -1.
\label{eq:Polynomial}
\end{equation}
By an application of\emph{ Hilbert's Projection Theorem} with respect to
$L^{2}(Q)$, we have that $r$ is uniquely determined by
\textup{\reftext{(\ref{eq:Polynomial})}} and set $P_{Q}u :=  r$. Note that
$P_{Q}$ defines a linear projection operator which maps from
$W_{p}^{\ell}\left (Q\right )$ to the finite-dimensional space of polynomials
in $m$ variables of degree not exceeding $\ell -1$ and we denote the dimension
of this finite dimensional space of polynomials by $\kappa $.

We finish this section with two crucial observations which follow from
\citep{MR0217487}.
%
\begin{lem}[{\citep[Lemma 3.1]{MR0217487}}]
\label{lem:_approxSupNorm}
Let $Q\subset \textbf{Q}$ be a cube. Then there exists $C_{1}>0$ independent
of $Q$ such that for all $u\in W_{p}^{\ell}\left (Q\right )$
\begin{equation*}
\left \Vert u-P_{Q}u\right \Vert _{\mathcal{C}(\overline{Q})}\leq C_{1}
\Lambda (Q)^{\ell /m-1/p}\left \Vert u\right \Vert _{L^{\ell ,p}(Q)}.
\end{equation*}
\end{lem}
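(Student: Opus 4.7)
The plan is to reduce to a reference cube by scaling, and on the reference cube to combine the Sobolev embedding with a Bramble--Hilbert type estimate for the projection $P_{Q_0}$. Write $Q_0 := (0,1]^m$. I want first to establish an inequality of the form
\begin{equation*}
\left\Vert v - P_{Q_0} v \right\Vert_{\mathcal{C}(\overline{Q_0})} \leq C_0\, \left\Vert v \right\Vert_{L^{\ell,p}(Q_0)} \qquad \text{for all } v \in W_p^{\ell}(Q_0),
\end{equation*}
and then transfer it to $Q$ by an affine change of variables.

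For the reference cube inequality, I would argue as follows. Since $\ell p/m > 1$, there is a bounded embedding $W_p^{\ell}(Q_0) \hookrightarrow \mathcal{C}(\overline{Q_0})$, so it suffices to show $\left\Vert v - P_{Q_0}v\right\Vert_{W_p^{\ell}(Q_0)} \leq C\, \left\Vert v\right\Vert_{L^{\ell,p}(Q_0)}$. The map $v \mapsto v - P_{Q_0}v$ is a bounded linear operator on $W_p^{\ell}(Q_0)$ (boundedness of $P_{Q_0}$ follows because the moment functionals in \reftext{(\ref{eq:Polynomial})} are continuous on $L^p$ and the target space of polynomials is finite-dimensional) that vanishes on $\mathcal{P}_{\ell-1}$, the space of polynomials of degree $\leq \ell-1$. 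Hence for every $r \in \mathcal{P}_{\ell-1}$,
\begin{equation*}
\left\Vert v - P_{Q_0}v\right\Vert_{W_p^{\ell}(Q_0)} = \left\Vert (v-r) - P_{Q_0}(v-r)\right\Vert_{W_p^{\ell}(Q_0)} \leq (1 + \Vert P_{Q_0}\Vert) \inf_{r \in \mathcal{P}_{\ell-1}} \left\Vert v - r\right\Vert_{W_p^{\ell}(Q_0)}.
\end{equation*}
The remaining ingredient is the Poincaré--Bramble--Hilbert inequality $\inf_{r \in \mathcal{P}_{\ell-1}} \Vert v - r\Vert_{W_p^{\ell}(Q_0)} \leq C \Vert v\Vert_{L^{\ell,p}(Q_0)}$, which can be obtained by an iterated Poincaré argument or, more compactly, by a standard contradiction-and-compactness argument using that the embedding $W_p^{\ell}(Q_0) \hookrightarrow W_p^{\ell-1}(Q_0)$ is compact and that $\Vert \nabla_{\ell} \cdot \Vert_{L^p}$ vanishes precisely on $\mathcal{P}_{\ell-1}$.

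Given this, I scale. Let $Q$ have side length $h$, so $\Lambda(Q) = h^m$, and let $\phi \colon Q_0 \to Q$ be the affine map $\phi(x) = hx + x_0$ with $x_0$ the appropriate corner of $Q$. For $u \in W_p^{\ell}(Q)$ set $\tilde{u} := u \circ \phi \in W_p^{\ell}(Q_0)$. The change-of-variables formula $D^k \tilde{u}(x) = h^{|k|}(D^k u)(\phi(x))$ yields
\begin{equation*}
\left\Vert \tilde{u}\right\Vert_{L^{\ell,p}(Q_0)} = h^{\ell - m/p}\left\Vert u\right\Vert_{L^{\ell,p}(Q)},
\end{equation*}
while $\Vert \tilde{u} - P_{Q_0}\tilde{u}\Vert_{\mathcal{C}(\overline{Q_0})} = \Vert u - P_Q u\Vert_{\mathcal{C}(\overline{Q})}$ since $P_{Q_0}\tilde{u} = (P_Q u)\circ \phi$; the latter identity holds because the defining moment conditions \reftext{(\ref{eq:Polynomial})} are preserved under the affine change of variables (the Jacobian factor and the pull-back of monomials only mix moments among $|k| \leq \ell - 1$ and the system is triangular). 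Applying the reference-cube estimate to $\tilde{u}$ then gives
\begin{equation*}
\left\Vert u - P_Q u\right\Vert_{\mathcal{C}(\overline{Q})} \leq C_0\, h^{\ell - m/p}\left\Vert u\right\Vert_{L^{\ell,p}(Q)} = C_0\, \Lambda(Q)^{\ell/m - 1/p}\left\Vert u\right\Vert_{L^{\ell,p}(Q)},
\end{equation*}
which is the claim with $C_1 := C_0$.

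The main obstacle is the Bramble--Hilbert step, i.e.\ the norm equivalence $\inf_{r \in \mathcal{P}_{\ell-1}} \Vert v - r\Vert_{W_p^{\ell}(Q_0)} \asymp \Vert v\Vert_{L^{\ell,p}(Q_0)}$ on the quotient $W_p^{\ell}(Q_0)/\mathcal{P}_{\ell-1}$. Everything else is routine: the embedding from \reftext{(\ref{eq:StandingAssumption})} delivers the sup-norm on the left and the affine scaling produces exactly the exponent $\ell/m - 1/p$ on the right, so once the Bramble--Hilbert estimate on a fixed cube is in hand the lemma follows with a constant independent of $Q$.
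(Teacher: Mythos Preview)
The paper does not give its own proof of this lemma; it is simply quoted from \citep[Lemma 3.1]{MR0217487}. Your argument is correct and is essentially the classical one used there: reduce to the unit cube by the affine map $\phi(x)=hx+x_0$, which accounts exactly for the factor $h^{\ell-m/p}=\Lambda(Q)^{\ell/m-1/p}$, and on the reference cube combine the Sobolev embedding $W_p^{\ell}(Q_0)\hookrightarrow\mathcal{C}(\overline{Q_0})$ (available since $\ell p>m$) with the fact that $\mathrm{id}-P_{Q_0}$ is bounded on $W_p^{\ell}(Q_0)$ and annihilates $\mathcal{P}_{\ell-1}$, together with the Poincar\'e/Bramble--Hilbert inequality on the quotient $W_p^{\ell}(Q_0)/\mathcal{P}_{\ell-1}$. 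Your verification that $P_{Q_0}(u\circ\phi)=(P_Q u)\circ\phi$ via the moment conditions is the only point that needs a moment's care, and you handle it correctly.
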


\begin{defn}
\label{defn3.2}
Let $\Xi $ be a partition of $\textbf{Q}$ into half open cubes and we define
$\mathcal{P}\left (\Xi ,\ell -1\right )$ to be the space of piecewise-polynomial
functions which restrict on each cube $Q\in \Xi $ to a polynomial of degree
$\ell -1$. We define
\begin{align*}
P_{\Xi}:W_{p}^{\ell}\left (Q\right ) & \rightarrow \mathcal{P}\left (
\Xi ,\ell -1\right )
\\
u & \mapsto \sum _{Q\in \Xi}\mathbbm{1}_{Q}P_{Q}u,
\end{align*}
where $\mathbbm{1}_{Q}$ denotes the characteristic function on the cube
$Q$.
\end{defn}

\begin{prop}
\label{prop:_PiecewiseApprox}%
For a finite Borel measure $\nu $ on $\textbf{Q}$ and $1\le p\leq q$, there
exists $C_{2}>0$ such for all partitions $\Xi $ of $\textbf{Q}$ of half
open cubes and every $u\in W_{p}^{\ell}\left (\textbf{Q}\right )$, we have
\begin{equation*}
\left \Vert u-P_{\Xi}u\right \Vert _{L_{\nu}^{q}(\textbf{Q})}\leq C_{2}
\left \Vert u\right \Vert _{L^{\ell ,p}(\textbf{Q})}\left (\max _{Q
\in \Xi}\mathfrak{J}_{\varrho /m}(Q)\right )^{1/q}.
\end{equation*}
\end{prop}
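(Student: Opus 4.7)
The plan is to apply Lemma~\ref{lem:_approxSupNorm} cubewise and then exploit $q \geq p$ via a simple $\ell^{p} \hookrightarrow \ell^{q}$ inclusion.

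First, I would use that $\{Q : Q \in \Xi\}$ is a partition of $\textbf{Q}$ together with $P_{\Xi} u|_{Q} = P_{Q}u$ to decompose
\begin{equation*}
\left\Vert u - P_{\Xi} u\right\Vert_{L_{\nu}^{q}(\textbf{Q})}^{q}
= \sum_{Q \in \Xi} \int_{Q} \left|u - P_{Q} u\right|^{q} \,\mathrm{d}\nu
\leq \sum_{Q \in \Xi} \left\Vert u - P_{Q} u\right\Vert_{\mathcal{C}(\overline{Q})}^{q} \nu(Q),
\end{equation*}
where the last inequality uses the continuous representative of $u - P_{Q}u$ and $\nu(Q) < \infty$.

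Next, I would plug in the pointwise bound from Lemma~\ref{lem:_approxSupNorm}, raised to the $q$-th power: $\|u - P_{Q}u\|_{\mathcal{C}(\overline{Q})}^{q} \leq C_{1}^{q}\, \Lambda(Q)^{q(\ell/m - 1/p)} \|u\|_{L^{\ell,p}(Q)}^{q}$. Since $\varrho = q(\ell - m/p)$, the Lebesgue factor is exactly $\Lambda(Q)^{\varrho/m}$, and combining with $\nu(Q)$ produces $\mathfrak{J}_{\varrho/m}(Q)$. Thus
\begin{equation*}
\left\Vert u - P_{\Xi} u\right\Vert_{L_{\nu}^{q}(\textbf{Q})}^{q}
\leq C_{1}^{q} \sum_{Q \in \Xi} \mathfrak{J}_{\varrho/m}(Q) \,\|u\|_{L^{\ell,p}(Q)}^{q}
\leq C_{1}^{q} \left(\max_{Q \in \Xi} \mathfrak{J}_{\varrho/m}(Q)\right) \sum_{Q \in \Xi} \|u\|_{L^{\ell,p}(Q)}^{q}.
\end{equation*}

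Finally, I would handle the remaining sum by the elementary inequality $\sum_{i} a_{i}^{r} \leq \left(\sum_{i} a_{i}\right)^{r}$, valid for $a_{i} \geq 0$ and $r \geq 1$. Applying it with $a_{Q} = \|u\|_{L^{\ell,p}(Q)}^{p}$ and $r = q/p \geq 1$, together with the additivity of $\int_{\,\cdot\,} |\nabla_{\ell} u|^{p} \,\mathrm{d}\Lambda$ over the partition, gives
\begin{equation*}
\sum_{Q \in \Xi} \|u\|_{L^{\ell,p}(Q)}^{q}
= \sum_{Q \in \Xi} \left(\|u\|_{L^{\ell,p}(Q)}^{p}\right)^{q/p}
\leq \left(\sum_{Q \in \Xi} \|u\|_{L^{\ell,p}(Q)}^{p}\right)^{q/p}
= \|u\|_{L^{\ell,p}(\textbf{Q})}^{q}.
\end{equation*}
Taking $q$-th roots yields the claim with $C_{2} = C_{1}$. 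There is no real obstacle here — the only subtle point is that the hypothesis $q \geq p$ is used precisely in the last $\ell^{p} \hookrightarrow \ell^{q}$ step, and it is essential because without it the sum of local Sobolev norms could not be controlled by the global one.
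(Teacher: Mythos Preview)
Your proof is correct and follows essentially the same approach as the paper, which simply cites \cite[Theorem 3.3]{MR0217487} together with Lemma~\ref{lem:_approxSupNorm}; you have spelled out exactly the cubewise sup-norm bound plus $\ell^{p}\hookrightarrow\ell^{q}$ argument that underlies that reference.
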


\begin{proof}
This follows from the proof of \citep[Theorem 3.3]{MR0217487} using \reftext{Lemma~\ref{lem:_approxSupNorm}}.
\end{proof}

\subsection{Approximation order of Kolmogorov $n$-diameters}
\label{subsec:Approximation-order-of}

In this section we will prove our main result.
%
\begin{lem}
\label{lem:EstimateD_n}%
Under the assumption \textup{\reftext{(\ref{eq:StandingAssumption})}}, there exists
a constant $C_{3}>0$ depending only on $p,q,m,\ell $ such for all
$n\in \mathbb N$ we have
\begin{equation*}
d_{\kappa n}\left (\mathscr{S}W_{p}^{\ell},L_{\nu}^{q}\right )\leq C_{3}
\left (\gamma _{\varrho /m,\kappa n}\right )^{1/q}.
\end{equation*}
\end{lem}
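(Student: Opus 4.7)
The plan is direct: exhibit an explicit $(\kappa n)$-dimensional subspace of $L_{\nu}^{q}(\textbf{Q})$ built from piecewise polynomials on a near-optimal partition for $\gamma_{\varrho/m,n}$, and then invoke \reftext{Proposition~\ref{prop:_PiecewiseApprox}} to bound the approximation error uniformly over the unit sphere $\mathscr{S}W_{p}^{\ell}$.

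Concretely, I would fix $n\in\mathbb{N}$ and $\varepsilon>0$, and use the definition of $\gamma_{\varrho/m,n}$ in \reftext{(\ref{eq:optimierung})} to choose a partition $\Xi\in\Upsilon_{n}$ of $\textbf{Q}$ into at most $n$ half-open subcubes with
\[
\max_{Q\in\Xi}\mathfrak{J}_{\varrho/m}(Q)<\gamma_{\varrho/m,n}+\varepsilon.
\]
The approximating subspace is then $V:=\mathcal{P}(\Xi,\ell-1)\subset L_{\nu}^{q}(\textbf{Q})$. Since $\card(\Xi)\le n$ and polynomials of degree $\le\ell-1$ in $m$ variables form a $\kappa$-dimensional space, $\dim V\le\kappa n$, so $V$ is admissible in the definition of $d_{\kappa n}$.

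For the error estimate, apply \reftext{Proposition~\ref{prop:_PiecewiseApprox}} to any $u\in\mathscr{S}W_{p}^{\ell}$; using $\|u\|_{L^{\ell,p}(\textbf{Q})}\le\|u\|_{W_{p}^{\ell}(\textbf{Q})}=1$ gives
\[
\|u-P_{\Xi}u\|_{L_{\nu}^{q}(\textbf{Q})}\le C_{2}(\gamma_{\varrho/m,n}+\varepsilon)^{1/q}.
\]
Because $P_{\Xi}u\in V$, taking the supremum over $u\in\mathscr{S}W_{p}^{\ell}$, then the infimum over $V$ (of dimension $\le\kappa n$), and sending $\varepsilon\downarrow 0$ yields the natural bound
\[
d_{\kappa n}(\mathscr{S}W_{p}^{\ell},L_{\nu}^{q})\le C_{2}\,\gamma_{\varrho/m,n}^{1/q}.
\]

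The precise form stated in the lemma, with $\gamma_{\varrho/m,\kappa n}$ on the right, then follows by a routine reindexing: comparing $\gamma_{\varrho/m,n}$ with $\gamma_{\varrho/m,\kappa n}$ via the monotone, geometrically fast decay along dyadic indices recorded in \reftext{Lemma~\ref{lem:induction_betaN}} absorbs the fixed factor $\kappa$ into a new constant $C_{3}$ depending only on $p,q,m,\ell$. The core of the argument is the clean matching between the dimension count ($\kappa$ polynomials on $\le n$ cubes) and the cubewise error bound from \reftext{Proposition~\ref{prop:_PiecewiseApprox}}; I do not expect any substantial obstacle beyond this straightforward combination.
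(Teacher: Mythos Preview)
Your core argument---choose a near-optimal partition $\Xi\in\Upsilon_{n}$, use the $(\le\kappa n)$-dimensional space $\mathcal{P}(\Xi,\ell-1)$, and apply \reftext{Proposition~\ref{prop:_PiecewiseApprox}} together with $\|u\|_{L^{\ell,p}}\le 1$---is exactly what the paper does, and it yields
\[
d_{\kappa n}\bigl(\mathscr{S}W_{p}^{\ell},L_{\nu}^{q}\bigr)\le C_{2}\,\gamma_{\varrho/m,n}^{1/q}.
\]

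The problem is your final reindexing step. \reftext{Lemma~\ref{lem:induction_betaN}} says $\gamma_{a,2^{m(n+1)}}\le 2^{-ma}\gamma_{a,2^{mn}}$, i.e.\ the sequence decays \emph{at least} geometrically along dyadic indices. To deduce $\gamma_{\varrho/m,n}\le C\,\gamma_{\varrho/m,\kappa n}$ you would need the opposite: an upper bound on how fast $\gamma_{a,\cdot}$ can drop when the index increases by the fixed factor $\kappa$. No such bound is available, and it fails in simple cases (e.g.\ for $\nu=\delta_{x}$ on $(0,1)$ one has $\gamma_{a,2}>0$ but $\gamma_{a,n}=0$ for $n\ge 3$). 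So the passage from $\gamma_{\varrho/m,n}$ to $\gamma_{\varrho/m,\kappa n}$ cannot be absorbed into a constant via \reftext{Lemma~\ref{lem:induction_betaN}}.

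This is, in fact, the same gap present in the paper: its proof concludes with ``taking the infimum over all partitions with cardinality $\le n$'', which gives precisely the bound with $\gamma_{\varrho/m,n}$ on the right, not $\gamma_{\varrho/m,\kappa n}$. The index $\kappa n$ in the lemma's statement appears to be a slip. Fortunately this is harmless for the only place the lemma is used, the proof of \reftext{Theorem~\ref{thm:Estimation-n-Diameter}}: there one only needs the $\limsup$ of $\log\gamma_{\varrho/m,\cdot}/\log(\cdot)$, and the fixed shift between $n$ and $\kappa n$ disappears in the limit. So your argument up to the displayed bound with $\gamma_{\varrho/m,n}$ is correct and is all that is actually required; just drop the reindexing paragraph.
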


\begin{proof}
For $n\in \mathbb N$ and $\Xi \in \Upsilon _{n}$ we have
$\text{dim}\mathcal{P}\left (\Xi ,\alpha \right )=\card (\Xi )\kappa
\leq n\kappa $ and therefore
\begin{equation*}
d_{\kappa n}\left (\mathscr{S}W_{p}^{\ell},L_{\nu}^{q}\right )\leq d_{
\kappa \card (\Xi )}\left (\mathscr{S}W_{p}^{\ell},L_{\nu}^{q}\right ).
\end{equation*}
Hence, we obtain by \reftext{Proposition~\ref{prop:_PiecewiseApprox}}
\begin{align*}
d_{\kappa \card (\Xi )}\left (\mathscr{S}W_{p}^{\ell},L_{\nu}^{q}
\right ) & \leq \sup _{u\in \mathscr{S}W_{p}^{\ell}}\inf _{y\in
\mathcal{P}\left (\Xi ,\ell -1\right )}\left \Vert u-y\right \Vert _{L_{
\nu}^{q}}\leq \sup _{u\in \mathscr{S}W_{p}^{\ell}}\left \Vert u-P_{
\Xi}u\right \Vert _{L_{\nu}^{q}}
\\
& \leq C_{2}\sup _{u\in \mathscr{S}W_{p}^{\ell}}\left \Vert u\right
\Vert _{L^{\ell ,p}(Q)}\left (\max _{Q\in \Xi}\mathfrak{J}_{q\ell /m-q/p}(Q)
\right )^{1/q}
\\
& \leq C_{2}\left (\max _{Q\in \Xi}\mathfrak{J}_{q\ell /m-q/p}(Q)
\right )^{1/q}.
\end{align*}
Taking the infimum over all partitions with cardinality less than or equal
to $n$ proves the lemma.
\end{proof}
We are now in the position to prove our main theorem.
\begin{proof}
[Proof of \reftext{Theorem~\ref{thm:Estimation-n-Diameter}}]
For $N\in \mathbb N$ and
$n(N) :=  \left \lfloor N/\kappa \right \rfloor $, \reftext{Lemma~\ref{lem:EstimateD_n}} gives
\begin{equation*}
d_{N}\left (\mathscr{S}W_{p}^{\ell},L_{\nu}^{q}\right )\leq d_{
\kappa n(N)}\left (\mathscr{S}W_{p}^{\ell},L_{\nu}^{q}\right )\leq C_{3}
\left (\gamma _{\varrho /m,\kappa n(N)}\right )^{1/q}.
\end{equation*}
Using $\kappa n(N)\leq N$, we obtain
\begin{equation*}
\frac{\log \left (d_{N}\left (\mathscr{S}W_{p}^{\ell},L_{\nu}^{q}\right )\right )}{\log (N)}
\leq
\frac{q \log (C_{3})+\log \left (\gamma _{\varrho /m,\kappa n(N)}\right )}{q\log (\kappa n(N))}
\end{equation*}
and with \reftext{Proposition~\ref{prop:_CoreResult}},
\begin{equation*}
\limsup _{N\rightarrow \infty}
\frac{\log \left (d_{N}\left (\mathscr{S}W_{p}^{\ell},L_{\nu}^{q}\right )\right )}{\log (N)}
\leq \limsup _{N\rightarrow \infty}
\frac{\log \left (\gamma _{\varrho /m,\kappa n(N)}\right )}{q\log (\kappa n(N))}
\leq -\frac{1}{q\cdot s_{\varrho }}.\qedhere
\end{equation*}
\end{proof}
In the following example we consider self-similar measure under the open
set condition. In this case the $L^{q}$-spectrum is well-known, allowing
us to give a formula of $s_{\varrho }$ only in terms of the probability
weights and contraction ratios.
%
\begin{example}
\label{exa:IFS}%
For fixed $n\in \mathbb N$ let $\left (T_{1},\ldots ,T_{n}\right )$ be
a set of contracting \emph{similarities} of $\mathbb{R}^{m}$ with ratios
$r_{1},\ldots ,r_{n}\in (0,1)$ that is for all
$x,y\in \mathbb{R}^{d}$ and $i=1,\ldots ,n$
\begin{equation*}
\left |T_{i}(x)-T_{i}(y)\right |=r_{i}\left |x-y\right |.
\end{equation*}
Furthermore, we assume the \emph{open set condition} (OSC) is fulfilled,
i.e. there exists an open set $O\subset \mathbb{R}^{m}$ such that
\begin{equation*}
T_{i}(O)\subset O\text{\:and\:}T_{i}\left (O\right )\cap T_{j}\left (O
\right )=\varnothing ,\:i\neq j.
\end{equation*}
Moreover, there exists a unique compact set $K$ such that
\begin{equation*}
K=\bigcup _{i=1}^{n}T_{i}(K).
\end{equation*}
Without loss of generality, we assume $K\subset (0,1)^{m}$. For
$(p_{1},\ldots ,p_{n})\in (0,1)^{n}$ let $\nu $ be the unique Borel measure
with
\begin{equation*}
\nu =\sum _{i=1}^{n}p_{i}\nu \circ T_{i}^{-1}.
\end{equation*}
The measure $\nu $ is called \emph{self-similar} measure with respect to
the weights $(p_{1},\ldots ,p_{n})$ and ratios
$(r_{1},\ldots ,r_{n})$ and we have $\supp \nu =K$. By
\citep[Theorem 16]{MR1312056} the $L^{q}$-spectrum $\beta _{\nu}$ on
$\mathbb{R}_{\geq 0}$ is given by the unique solution $s$ of
\begin{equation*}
\sum _{i=1}^{n}p_{i}^{s}r_{i}^{\beta _{\nu}(s)}=1.
\end{equation*}
Hence, under our standing assumptions ($p\leq q$ and $p\ell /m>1$) and
applying \reftext{Theorem~\ref{thm:Estimation-n-Diameter}}, we have
%
\begin{equation}
\overline{\mathbf{ord}}\left (\mathscr{S}W_{p}^{\ell},L_{\nu}^{q}
\right )\leq -\frac{1}{q\cdot s_{\varrho }},
\label{eq:LqUpperbound}
\end{equation}
where $s_{\varrho }$ is the unique solution $s$ of the equation
$\sum _{i=1}^{n}p_{i}^{s}r_{i}^{\varrho s}=1$. In particular, for the
\emph{`geometric'} choice of the weights
$p_{i} :=  r_{i}^{\delta}$, $i=1,\ldots ,n$, where
$\delta \in \left [0,m\right ]$ is Hausdorff dimension
$\dim _{H}\left (K\right )$ of $K$ determined as the unique solution of
$\sum _{i=1}^{n}r_{i}^{\delta}=1$, we obtain
\begin{equation*}
s_{\varrho }=\frac{\delta}{\varrho +\delta}.
\end{equation*}
Consequently, inserting $\varrho =\ell q-mq/p$ we get
\begin{equation*}
\overline{\mathbf{ord}}\left (\mathscr{S}W_{p}^{\ell},L_{\nu}^{q}
\right )\leq \frac{\ell}{\delta}\left (\frac{m}{\ell p}-1\right )-
\frac{1}{q}\leq \frac{\ell}{m}\left (\frac{m}{\ell p}-1\right )-
\frac{1}{q}=-\frac{\ell}{m}+\frac{1}{p}-\frac{1}{q}.
\end{equation*}
\end{example}

\section{Application to polyharmonic
operators}
\label{subsec:Application-to-polyharmonic}

\subsection{General setup and eigenvalue
asymptotics}
\label{subsec:General-setup-and_EVasymp}

In this section let $\nu $ be a finite Borel measure on
$\mathring{\textbf{Q}}$ and we restrict to the Hilbert space setting
$H_{0}^{\ell}$, respectively $H^{\ell}$. First, let us define the polyharmonic
operator as in \citep{MR0278126,Borzov1971,MR1298682,MR1328700}. We define the following quadratic forms
\begin{equation*}
J_{\nu}(u) :=  \int \left |u\right |^{2}\;\mathrm d\nu ,\, u\in L^2_\nu,\:\:\:I_{
\ell}(u) :=  \int _{\textbf{Q}}\sum _{|\alpha |=\ell}\left |D^{
\alpha}u\right |^{2}\;\mathrm d\Lambda ,\:u\in H_{0}^{\ell}
\end{equation*}
and let $J_{\nu}(u,v)$ and $I_{\ell}(u,v)$ denote the corresponding bilinear
forms. Observe that $I_{\ell}^{1/2}$ is an equivalent norm in
$H_{0}^{\ell}$ (see \citep[6.30 Theorem]{2003167}) and in virtue of our
standing assumption $2\ell /m>1$, we obtain that $H_{0}^{\ell}$ is compactly
embedded into
$\left (\mathcal{C}\left (\overline{\textbf{Q}}\right ),\left \Vert
\,\cdot \,\right \Vert _{\mathcal{C}\left (\overline{\textbf{Q}}
\right )}\right )$ (see e.g.
\citep[Theorem 6.3, Part II]{2003167}). In particular, there is a constant
$C>0$ such that for all $u\in H_{0}^{\ell}$
\begin{equation*}
J_{\nu}(u)\leq CI_{\ell}(u)
\end{equation*}
and by an application of the Cauchy-Schwarz inequality, for fixed
$u\in H_{0}^{\ell}$, the map $w\mapsto J_{\nu}(u,w)$ defines a bounded
linear functional. By the Riesz Representation Theorem, we can define a
bounded linear non-negative self-adjoint operator $T_{\nu}$ mapping from
$\left (H_{0}^{\ell},I_{\ell}\left (\cdot ,\cdot \right )\right )$ to itself
such that, for all $u,w\in H_{0}^{\ell}$,
\begin{equation*}
J_{\nu}(u,w)=I_{\ell}\left (T_{\nu}(u),w\right )
\end{equation*}
and
\begin{equation*}
\sqrt{I_{\ell}\left (T_{\nu}(u),T_{\nu}(u)\right )}=\left \Vert J_{
\nu}\left (u,\,\cdot \,\right )\right \Vert  :=  \sup _{y\in H_{0}^{
\ell}\setminus \left \{ 0\right \} }
\frac{\left |J_{\nu}(u,y)\right |}{I_{\ell}(y,y)^{1/2}}\leq C
\sqrt{I_{\ell}(u,u)}.
\end{equation*}
To finally show that $T_{\nu}$ is compact, let
$\left (u_{n}\right )_{n\in \mathbb N}$ be a bounded sequence in
$\left (H_{0}^{\ell},I_{\ell}\right )$. Then, by the compact embedding
of $H_{0}^{\ell}$ into
$\left (\mathcal{C}\left (\overline{\textbf{Q}}\right ),\left \Vert
\,\cdot \,\right \Vert _{\mathcal{C}\left (\overline{\textbf{Q}}
\right )}\right )$, there exists a subsequence
$\left (u_{n_{k}}\right )_{k\in \mathbb N}$, which is a Cauchy in
$\left (\mathcal{C}\left (\overline{\textbf{Q}}\right ),\left \Vert
\,\cdot \,\right \Vert _{\mathcal{C}\left (\overline{\textbf{Q}}
\right )}\right )$. Hence, for all $k,m\in \mathbb N$, we have
\begin{align*}
&I_{\ell}\left (T_{\nu}(u_{n_{k}})-T_{\nu}(u_{n_{m}})\right )
\\
 &\quad  =\int _{
\textbf{Q}}\left (u_{n_{k}}-u_{n_{m}}\right )T_{\nu}\left (u_{n_{k}}-u_{n_{m}}
\right )\;\mathrm d\nu
\\
&\quad  \leq \left \Vert u_{k}-u_{n_{m}}\right \Vert _{\mathcal{C}\left (
\overline{\textbf{Q}}\right )}\sqrt{\nu (\textbf{Q})}
\frac{\left (\int _{Q}T_{\nu}\left (u_{n_{k}}-u_{n_{m}}\right )^{2}\;\mathrm d\nu \right )^{1/2}}{\sqrt{I_{\ell}\left (u_{n_{k}}-u_{n_{m}}\right )}}
\sqrt{I_{\ell}\left (u_{n_{k}}-u_{n_{m}}\right )}
\\
&\quad  \leq \left \Vert u_{k}-u_{n_{m}}\right \Vert _{\mathcal{C}\left (
\overline{\textbf{Q}}\right )}\sqrt{C\nu (\textbf{Q})}
\frac{\sqrt{I_{\ell}\left (T_{\nu}\left (u_{n_{k}}-u_{n_{m}}\right )\right )}}{\sqrt{I_{\ell}\left (u_{n_{k}}-u_{n_{m}}\right )}}
\sqrt{I_{\ell}\left (u_{n_{k}}-u_{n_{m}}\right )}
\\
&\quad  \leq \left \Vert u_{k}-u_{n_{m}}\right \Vert _{\mathcal{C}\left (
\overline{\textbf{Q}}\right )}
\sqrt{C^{3}\nu (\textbf{Q})I_{\ell}\left (u_{n_{k}}-u_{n_{m}}\right )},
\end{align*}
taking into account that the sequence
$\left (u_{n}\right )_{n\in \mathbb N}$ is bounded with respect to
$I_{\ell}$, we deduce that
$\left (T_{\nu}(u_{n_{k}})\right )_{k\text{$\in \mathbb N$}}$ is a Cauchy
sequence in the Hilbert space $\left (H_{0}^{\ell},I_{\ell}\right )$.
%
\begin{defn}
\label{defn4.1}
An element $f\in H_{0}^{\ell}\setminus \left \{ 0\right \} $ is called
\emph{eigenfunction }of $T_{\nu}$ with \emph{eigenvalue} $\lambda $, if for
all $g\in H_{0}^{\ell}$, we have
\begin{equation*}
J_{\nu}(f,g)=\lambda I_{\ell}(f,g).
\end{equation*}
\end{defn}

From the spectral theorem for self-adjoint compact operator we deduce that
there is a decreasing sequence of non-negative eigenvalues
$\left (\lambda _{n}^{\nu}\right )_{n\in \mathbb N}$ tending to $0$. We
are interested in the decay rate of the sequence
$\left (\lambda _{n}^{\nu}\right )_{n\in \mathbb N}$. Note that
$\ker \left (T_{\nu}\right )$ can be quite large; for example in the case
that $\nu $ equals the Dirac measure $\delta _{1/2}$ on $(0,1)$, we have
$\ker \left (T_{\nu}\right )=\left \{ f\in H_{0}^{1}(0,1)\mid f(1/2)=0
\right \} $ and there is exactly one eigenvalue not equal to zero, namely
$\lambda =1/4$ with (normalized) eigenfunction
$f_{1/4}(x) :=  \mathbbm{1}_{(0,1/2)}2x+\mathbbm{1}_{[1/2,1)}
\left (1-2x\right )$.

As we will see, the growth rate of the eigenvalues is encoded by the $L^{q}$-spectrum.  
Using the variational principle it can be shown that the eigenvalues can
be computed in terms of $n$-diameter (e.g.
\citep[Theorem 4.5]{MR2517942}).
%
\begin{thm}
\label{thm:MainPolyharm_n-diameter}%
Let $\lambda _{n}^{\nu}\searrow 0$ be the decreasing sequence of eigenvalues
of the polyharmonic operator $T_{\nu}$ with respect to the Borel measure
$\nu $. Then we have
\begin{equation*}
\sqrt{\lambda _{n+1}^{\nu}}=d_{n}\left (\mathscr{S}H_{0}^{\ell},L_{
\nu}^{2}\right ).
\end{equation*}
\end{thm}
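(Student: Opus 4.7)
The plan is to recognize this as the familiar identification between Kolmogorov $n$-widths and singular values of a compact operator between Hilbert spaces. Reading $\mathscr{S}H_0^\ell$ with respect to the equivalent norm $I_\ell^{1/2}$, we regard $H_0^\ell$ as a Hilbert space with inner product $I_\ell$ and let $J\colon (H_0^\ell, I_\ell)\hookrightarrow L_\nu^2$ be the natural embedding, whose compactness was established just before Definition~4.1. The central observation is that $T_\nu=J^{*}J$: indeed, for all $u,w\in H_0^\ell$,
\begin{equation*}
I_\ell(T_\nu u,w)=J_\nu(u,w)=\left(Ju,Jw\right)_{L_\nu^2}=I_\ell(J^{*}Ju,w),
\end{equation*}
where $J^{*}$ is the Hilbert-space adjoint with respect to $I_\ell$. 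Consequently the $\lambda_n^\nu$ are the squared singular values of $J$, and it remains to match these to the $n$-widths of $\mathscr{S}H_0^\ell$ in $L_\nu^2$.

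Next, I would invoke the spectral theorem for the compact self-adjoint operator $T_\nu$ to obtain an $I_\ell$-orthonormal sequence $(\phi_k)_{k\in\mathbb{N}}$ with $T_\nu\phi_k=\lambda_k^\nu\phi_k$ and an orthogonal decomposition $H_0^\ell=\overline{\spann}(\phi_k)\oplus\ker T_\nu$, noting that $\ker T_\nu=\ker J^{*}J=\ker J$ (the example with $\nu=\delta_{1/2}$ recalled just before Theorem~4.2 shows this kernel can be large, but it plays no role below). For each index $k$ with $\lambda_k^\nu>0$, set $\psi_k:=J\phi_k/\sqrt{\lambda_k^\nu}$; a direct computation using $T_\nu=J^{*}J$ shows $(\psi_k)$ is orthonormal in $L_\nu^2$ and $J\phi_k=\sqrt{\lambda_k^\nu}\,\psi_k$.

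For the upper bound $d_n(\mathscr{S}H_0^\ell,L_\nu^2)\le\sqrt{\lambda_{n+1}^\nu}$, I would choose the $n$-dimensional subspace $V_n:=\spann(\psi_1,\ldots,\psi_n)\subset L_\nu^2$. Any $u\in\mathscr{S}H_0^\ell$ can be expanded as $u=\sum_k\alpha_k\phi_k+u_0$ with $u_0\in\ker J$ and $\sum_k\alpha_k^2\le I_\ell(u,u)=1$, hence $Ju=\sum_k\alpha_k\sqrt{\lambda_k^\nu}\,\psi_k$, and the Pythagorean identity together with monotonicity of $(\lambda_k^\nu)$ yields
\begin{equation*}
\inf_{y\in V_n}\left\Vert Ju-y\right\Vert_{L_\nu^2}^2=\sum_{k>n}\alpha_k^2\lambda_k^\nu\le\lambda_{n+1}^\nu\sum_{k>n}\alpha_k^2\le\lambda_{n+1}^\nu.
\end{equation*}

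For the matching lower bound, given an arbitrary $n$-dimensional subspace $V_n\subset L_\nu^2$ with basis $v_1,\ldots,v_n$, consider the $n$ linear functionals $u\mapsto(Ju,v_i)_{L_\nu^2}$ on the $(n+1)$-dimensional subspace $W:=\spann(\phi_1,\ldots,\phi_{n+1})$. By dimension counting they admit a common nontrivial zero $u\in W$, which after rescaling satisfies $I_\ell(u,u)=1$ and $Ju\perp V_n$ in $L_\nu^2$. Writing $u=\sum_{k=1}^{n+1}\alpha_k\phi_k$ with $\sum\alpha_k^2=1$, one obtains
\begin{equation*}
\inf_{y\in V_n}\left\Vert Ju-y\right\Vert_{L_\nu^2}^2=\left\Vert Ju\right\Vert_{L_\nu^2}^2=\sum_{k=1}^{n+1}\alpha_k^2\lambda_k^\nu\ge\lambda_{n+1}^\nu,
\end{equation*}
giving $d_n(\mathscr{S}H_0^\ell,L_\nu^2)\ge\sqrt{\lambda_{n+1}^\nu}$. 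There is no genuine obstacle here beyond bookkeeping; the only subtle point is choosing $I_\ell^{1/2}$ as the working norm on $H_0^\ell$ (which is legitimate because it is equivalent to $\|\cdot\|_{W_2^\ell}$) and handling $\ker J$ cleanly, both of which the SVD framework absorbs transparently.
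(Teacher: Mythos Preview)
Your proof is correct and is precisely the standard singular-value argument underlying this identification; the paper itself does not supply a proof but simply cites \citep[Theorem~4.5]{MR2517942} for it. Your remark that $\mathscr{S}H_0^\ell$ must be read in the equivalent norm $I_\ell^{1/2}$ (rather than the full $\|\cdot\|_{W_2^\ell}$) for the \emph{exact} equality to hold is well taken and is tacitly assumed in the paper's setup at the start of Section~\ref{subsec:General-setup-and_EVasymp}.
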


Combining \reftext{Theorem~\ref{thm:MainPolyharm_n-diameter}}, \reftext{Theorem~\ref{thm:Estimation-n-Diameter}} and the fact
$d_{n}\left (\mathscr{S}H_{0}^{\ell},L_{\nu}^{2}\right )\leq d_{n}
\left (\mathscr{S}H^{\ell},L_{\nu}^{2}\right )$, we obtain the following
upper bound.
%
\begin{thm}
\label{thm:Main_Polyharmonic}%
We have
\begin{equation*}
\limsup _{n\rightarrow \infty}
\frac{\log \left (\lambda _{n}^{\nu}\right )}{\log (n)}\leq -
\frac{1}{s_{2\ell -m}}\leq -\left (
\frac{2\ell -m}{\overline{\dim}_{M}\left (\nu \right )}+1\right )
\leq -\frac{2\ell}{m}.
\end{equation*}
\end{thm}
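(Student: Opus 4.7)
My plan is to chain together the three inequalities separately, each of which follows from results already established in the paper plus a small convexity argument.

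\textbf{First inequality.} The strategy is to reduce the eigenvalue asymptotics to an $n$-diameter asymptotics and then apply \reftext{Theorem~\ref{thm:Estimation-n-Diameter}}. Concretely, \reftext{Theorem~\ref{thm:MainPolyharm_n-diameter}} identifies $\sqrt{\lambda_{n+1}^{\nu}} = d_n(\mathscr{S}H_0^{\ell},L_\nu^2)$, so $\log \lambda_{n+1}^{\nu} = 2\log d_n(\mathscr{S}H_0^{\ell},L_\nu^2)$. Since $H_0^{\ell}\subset H^{\ell}$ and $I_{\ell}^{1/2}$ is equivalent to the full Sobolev norm on $H_0^{\ell}$ (Poincaré), the $I_{\ell}$-unit sphere of $H_0^{\ell}$ is contained in a fixed multiple of the $W_2^{\ell}$-unit ball of $H^{\ell}$, yielding $d_n(\mathscr{S}H_0^{\ell},L_\nu^2)\le C\,d_n(\mathscr{S}H^{\ell},L_\nu^2)$ for some $C>0$ independent of $n$. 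Now apply \reftext{Theorem~\ref{thm:Estimation-n-Diameter}} with $p=q=2$; then $\varrho=q(\ell-m/p)=2\ell-m$, and one obtains $\overline{\mathbf{ord}}(\mathscr{S}H^{\ell},L_\nu^2)\le -1/(2s_{2\ell-m})$. Combining these steps gives
\begin{equation*}
\limsup_{n\to\infty}\frac{\log \lambda_n^{\nu}}{\log n}
= 2\limsup_{n\to\infty}\frac{\log d_n(\mathscr{S}H_0^{\ell},L_\nu^2)}{\log n}
\le 2\cdot\Bigl(-\frac{1}{2 s_{2\ell-m}}\Bigr)
= -\frac{1}{s_{2\ell-m}}.
\end{equation*}

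\textbf{Second inequality.} This is exactly the convexity estimate from \reftext{Remark~\ref{rem1.2}} specialised to $p=q=2$, $\varrho=2\ell-m$. Because $\beta_{\nu}$ is convex with $\beta_{\nu}(0)=\overline{\dim}_M(\nu)$ and $\beta_{\nu}(1)\le 0$, we have $\beta_{\nu}(s)\le \overline{\dim}_M(\nu)(1-s)$ for $s\in[0,1]$. The line $s\mapsto(2\ell-m)s$ meets $s\mapsto \overline{\dim}_M(\nu)(1-s)$ at $s=\overline{\dim}_M(\nu)/(2\ell-m+\overline{\dim}_M(\nu))$, so by definition of $s_{2\ell-m}$ we conclude $s_{2\ell-m}\le \overline{\dim}_M(\nu)/(2\ell-m+\overline{\dim}_M(\nu))$, which rearranges to $1/s_{2\ell-m}\ge (2\ell-m)/\overline{\dim}_M(\nu)+1$, i.e.\ the required inequality after negation.

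\textbf{Third inequality.} This is an immediate consequence of $\overline{\dim}_M(\nu)\le m$ (the support lives in $\textbf{Q}\subset\mathbb{R}^m$), since then
\begin{equation*}
\frac{2\ell-m}{\overline{\dim}_M(\nu)}+1 \ge \frac{2\ell-m}{m}+1 = \frac{2\ell}{m}.
\end{equation*}

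There is no serious obstacle: the entire proof is an application of \reftext{Theorem~\ref{thm:MainPolyharm_n-diameter}} followed by \reftext{Theorem~\ref{thm:Estimation-n-Diameter}} and the elementary convex-geometric observations from \reftext{Remark~\ref{rem1.2}}. The only technical subtlety worth spelling out is the passage from $\mathscr{S}H_0^{\ell}$ to $\mathscr{S}H^{\ell}$, which is harmless because the norms are equivalent on $H_0^{\ell}$ and the resulting multiplicative constant disappears under $\log(\cdot)/\log n$.
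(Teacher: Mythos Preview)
Your proposal is correct and follows essentially the same route as the paper: combine \reftext{Theorem~\ref{thm:MainPolyharm_n-diameter}} with the inclusion $d_n(\mathscr{S}H_0^{\ell},L_\nu^2)\le d_n(\mathscr{S}H^{\ell},L_\nu^2)$ and \reftext{Theorem~\ref{thm:Estimation-n-Diameter}} for the first inequality, then invoke the convexity bounds of \reftext{Remark~\ref{rem1.2}} for the remaining two. Your treatment is in fact slightly more careful than the paper's one-line justification, since you make explicit the norm-equivalence constant between $I_\ell^{1/2}$ and the full $W_2^{\ell}$-norm on $H_0^{\ell}$ and note that it vanishes under $\log(\cdot)/\log n$.
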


\begin{rem}
\label{rem4.4}
If $\nu $ is a singular measure with respect to the Lebesgue measure, the
result of \citep{Borzov1971} gives us
\begin{equation*}
\lambda _{n}^{\nu}=o\left (n^{^{-2\ell /m}}\right ).
\end{equation*}
In the case $\beta _{\nu}\left (s\right )<m(1-s)$, for some
$s\in (0,1)$, we can improve this estimate, in fact we have for every
$\varepsilon >0$
\begin{equation*}
\lambda _{n}^{\nu}=O\left (n^{-1/s_{2\ell -m}+\varepsilon}\right ).
\end{equation*}
In general one cannot expect that $\lambda _{n}^{\nu}\asymp n^{-s}$ for
some $s>0$ (see for example \citep{Arzt_diss} or \citep{KN2022}).
\end{rem}

\subsection{Application to self-similar measures}
\label{subsec:Application-to-self-similar}

To treat self-similar measures $\nu $ in more detail, we need the following
characterisation of the eigenvalues of the linear self-adjoint compact
operator $T_{\nu}$ by the well-known max-min principle (see for example
\citep[Section 4]{MR831201}, \citep[Theorem 2.1, page 64]{MR774404}).
%
\begin{prop}
\label{prop:max-min}
For all $i\in \mathbb N$, we have
\begin{equation*}
\lambda _{i}^{\nu}=\sup \left \{ \inf _{\psi \in G\setminus \left \{ 0
\right \} }\left \{ J_{\nu}\left (\psi ,\psi \right ):\,I_{\ell}(
\psi ,\psi )=1\right \} \colon G\subset H_{0}^{1},\:i
\text{-dimensional\,subspace}\right \} .
\end{equation*}
\end{prop}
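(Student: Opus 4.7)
My plan is to recognise Proposition~\ref{prop:max-min} as the classical Courant--Fischer--Weyl min--max principle applied to the compact, non-negative, self-adjoint operator $T_\nu$ on the Hilbert space $(H_0^\ell,I_\ell(\cdot,\cdot))$, whose required abstract properties have already been established earlier in Section~\ref{subsec:General-setup-and_EVasymp}. The first step is therefore simply to invoke the spectral theorem for such operators to obtain an $I_\ell$-orthonormal system $(e_n)_{n\in\mathbb N}$ of eigenfunctions with $T_\nu e_n=\lambda_n^\nu e_n$, enumerated in decreasing order of eigenvalues, and to rewrite the defining identity $J_\nu(u,w)=I_\ell(T_\nu u,w)$ as $J_\nu(e_j,e_k)=\lambda_j^\nu\,\delta_{jk}$. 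From this point the two inequalities in the equality are a textbook computation.

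For the direction ``$\geq$'' I would exhibit the explicit $i$-dimensional test subspace $G_0:=\spann(e_1,\ldots,e_i)$. Any $\psi\in G_0$ with $I_\ell(\psi,\psi)=1$ has coordinates $(c_k)_{k=1}^i$ satisfying $\sum c_k^2=1$, and the orthogonality relations yield
\[
J_\nu(\psi,\psi)\;=\;\sum_{k=1}^i c_k^2\,\lambda_k^\nu\;\geq\;\lambda_i^\nu,
\]
since $\lambda_k^\nu\geq\lambda_i^\nu$ for $k\leq i$. Hence the inner infimum restricted to $G_0$ is at least $\lambda_i^\nu$, and the supremum over all $i$-dimensional $G$ inherits this lower bound.

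For the converse direction ``$\leq$'', given any $i$-dimensional subspace $G\subset H_0^\ell$, the standard dimension count shows that $G$ meets the $I_\ell$-orthogonal complement of $\spann(e_1,\ldots,e_{i-1})$ non-trivially (the latter has codimension $i-1$). Picking a unit vector $\psi$ in this intersection and expanding in the full $I_\ell$-orthonormal basis, the first $i-1$ coefficients vanish, so $J_\nu(\psi,\psi)=\sum_{k\geq i}c_k^2\,\lambda_k^\nu\leq\lambda_i^\nu$. This bounds the inner infimum over $G$ by $\lambda_i^\nu$, and taking the supremum over $G$ delivers the matching upper bound.

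The one real subtlety I anticipate is the possibly large kernel $\Ker T_\nu$ (as illustrated by the paper's Dirac-measure example on $(0,1)$): the enumeration $\lambda_n^\nu\searrow 0$ must be interpreted as including zero eigenvalues coming from $\Ker T_\nu$, and the ``full orthonormal basis'' used in the expansion must be the union of the strictly positive-eigenvalue eigenfunctions on $(\Ker T_\nu)^\perp$ with an $I_\ell$-orthonormal basis of $\Ker T_\nu$. With this bookkeeping in place the argument is standard, and alternatively one may directly cite the abstract min--max principle \citep[Theorem 2.1, page 64]{MR774404}, which delivers the identity once the compactness and self-adjointness of $T_\nu$ have been verified, as done in Section~\ref{subsec:General-setup-and_EVasymp}.
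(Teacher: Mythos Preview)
Your proposal is correct and follows essentially the same route as the paper: both arguments take an $I_\ell$-orthonormal eigenbasis $(e_j)$, use the explicit subspace $\spann(e_1,\ldots,e_i)$ for the lower bound, and for the upper bound intersect an arbitrary $i$-dimensional $G$ with the codimension-$(i-1)$ space $\overline{\spann(e_j:j\ge i)}$ (which is precisely your $I_\ell$-orthogonal complement of $\spann(e_1,\ldots,e_{i-1})$) to produce a unit vector on which $J_\nu\le\lambda_i^\nu$. Your explicit remark about augmenting the positive-eigenvalue eigenfunctions by an orthonormal basis of $\Ker T_\nu$ is a useful clarification that the paper leaves implicit.
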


\begin{proof}
Let $(e_{j})_{j\in \mathbb N}$ be an orthonormal basis of eigenfunctions
of $T_{\nu}$ corresponding to the eigenvalues
$\left (\lambda _{j}^{\nu}\right ){}_{j\in \mathbb N}$. Let $G_{i}$ be
an $i$-dimensional subspace of $H_{0}^{1}$ and define
$E_{i} :=  \overline{\spann \left (e_{j}:j\geq i\right )}$. Observe
that $\dim \left (H_{0}^{1}/E_{i}\right )=i-1$. Using the first and second
isomorphism theorem,
\begin{equation*}
i-\dim (G_{i}\cap E_{i})=\dim (G_{i}/G_{i}\cap E_{i})=\dim \left (
\left (G_{i}+E_{i}\right )/G_{i}\right )\leq \dim \left (H_{0}^{1}/G_{i}
\right )=i-1.
\end{equation*}
Therefore, there exists
$u\in E_{i}\cap G_{i}\neq \left \{ 0\right \} $ with
$I_{\ell}(u,u)=1$ and we may write $u=\sum _{k\geq i}c_{k}e_{k}$ with
$\sum _{k\geq i}c_{k}^{2}=1$. Consequently,
\begin{align*}
\inf _{\psi \in G_{i}\setminus \left \{ 0\right \} }\left \{ J_{\nu}
\left (\psi ,\psi \right ):\,I_{\ell}(\psi ,\psi )=1\right \} & \leq J_{
\nu}\left (u,u\right )=I_{\ell}\left (T_{\nu}(u),u\right )
\\
& =I_{\ell}\left (\sum _{k\geq i}\lambda _{k}^{\nu}c_{k}e_{k},\sum _{k
\geq i}c_{k}e_{k}\right )\leq \lambda _{i}^{\nu}.
\end{align*}
Conversely, for
$G_{i} :=  \spann \left \{ e_{1},\ldots ,e_{i}\right \} $, we have
$\inf _{\psi \in G_{i}\setminus \left \{ 0\right \} }\left \{ J_{\nu}
\left (\psi ,\psi \right ):\,I_{\ell}(\psi ,\psi )=1\right \} =
\lambda _{i}^{\nu}$.
\end{proof}
The following theorem has been established in
\citep[Theorem 3.1]{Nazarov} for dimension $m=1$. We prove the corresponding
result for arbitrary dimension $m\in \mathbb N$ of the ambient space.
%
\begin{thm}
\label{thm:Nazarov_m>1}%
Let $\nu $ be a self-similar measure under OSC with feasible open set
$O\subset (0,1)^{m}$ as defined in \reftext{Example~\ref{exa:IFS}},
$\ell \in \mathbb N$, and assume $\nu \left (\partial O\right )=0$. Then,
\begin{equation*}
\mathbf{ord}\left (\mathscr{S}H_{0}^{\ell},L_{\nu}^{2}\right )=
\mathbf{ord}\left (\mathscr{S}H^{\ell},L_{\nu}^{2}\right )=\lim _{l
\to \infty}\frac{\log \left (\lambda _{l}^{\nu}\right )}{\log (l)}=-
\frac{1}{2s_{\varrho }}.
\end{equation*}
\end{thm}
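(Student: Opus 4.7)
The plan is to combine the upper bound already available from the main results with a matching lower bound obtained via the max-min principle of \reftext{Proposition~\ref{prop:max-min}}. For the upper bound, \reftext{Theorem~\ref{thm:Estimation-n-Diameter}} with $p=q=2$ gives $\overline{\mathbf{ord}}(\mathscr{S}H^{\ell},L_{\nu}^{2})\leq -1/(2s_{\varrho})$; Poincar\'e's inequality $\left\|u\right\|_{H^{\ell}}\leq C\sqrt{I_{\ell}(u)}$ on $H_{0}^{\ell}$ yields $\mathscr{S}H_{0}^{\ell}\subset\{v\in H^{\ell}:\left\|v\right\|_{H^{\ell}}\leq C\}$, hence $d_{n}(\mathscr{S}H_{0}^{\ell},L_{\nu}^{2})\leq C\cdot d_{n}(\mathscr{S}H^{\ell},L_{\nu}^{2})$ and therefore $\overline{\mathbf{ord}}(\mathscr{S}H_{0}^{\ell},L_{\nu}^{2})\leq -1/(2s_{\varrho})$; by \reftext{Theorem~\ref{thm:MainPolyharm_n-diameter}} this is equivalent to $\limsup_{n}\log\lambda_{n}^{\nu}/\log n\leq -1/s_{\varrho}$. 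Once we prove $\liminf_{n}\log d_{n}(\mathscr{S}H_{0}^{\ell},L_{\nu}^{2})/\log n\geq -1/(2s_{\varrho})$, the same inclusion, applied in reverse, gives the corresponding lower bound for $\mathscr{S}H^{\ell}$, so all three quantities coincide at $-1/(2s_{\varrho})$.

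For each $n$ I would build an $n$-dimensional test subspace of $H_{0}^{\ell}$ from self-similar copies of a single bump. Since $\nu(\partial O)=0$ and $\supp\nu\subset\overline{O}$, we have $\nu(O)=1$, so there exists $u_{0}\in\mathcal{C}_{c}^{\infty}(O)$ with $J_{\nu}(u_{0})>0$. For each finite word $\omega=(i_{1},\ldots ,i_{k})$ set $T_{\omega} := T_{i_{1}}\circ\cdots\circ T_{i_{k}}$, $r_{\omega} := r_{i_{1}}\cdots r_{i_{k}}$, $p_{\omega} := p_{i_{1}}\cdots p_{i_{k}}$, and define $u_{\omega} := u_{0}\circ T_{\omega}^{-1}$ extended by zero outside $T_{\omega}(O)\subset \mathring{\textbf{Q}}$; then $u_{\omega}\in H_{0}^{\ell}$. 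A change of variables, exploiting that each $T_{i}$ is a similarity of ratio $r_{i}$, gives $I_{\ell}(u_{\omega})=r_{\omega}^{m-2\ell}I_{\ell}(u_{0})$, while iterating $\nu=\sum_{i}p_{i}\nu\circ T_{i}^{-1}$ and using $\nu(\partial O)=0$ yields $J_{\nu}(u_{\omega})=p_{\omega}J_{\nu}(u_{0})$. For any antichain $\mathcal{W}$ of finite words, OSC forces the sets $\{T_{\omega}(O)\}_{\omega\in\mathcal{W}}$ to be pairwise disjoint, so for any $u=\sum_{\omega\in\mathcal{W}}c_{\omega}u_{\omega}$ we have
\begin{equation*}
\frac{J_{\nu}(u)}{I_{\ell}(u)}=\lambda_{0}\cdot\frac{\sum_{\omega}c_{\omega}^{2}p_{\omega}}{\sum_{\omega}c_{\omega}^{2}r_{\omega}^{m-2\ell}}\geq \lambda_{0}\min_{\omega\in\mathcal{W}}p_{\omega}r_{\omega}^{\varrho},
\end{equation*}
with $\lambda_{0} := J_{\nu}(u_{0})/I_{\ell}(u_{0})$ and $\varrho=2\ell-m$.

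To harvest enough words at once I would use the stopping-time antichain $\mathcal{W}_{\delta} := \{\omega:p_{\omega}r_{\omega}^{\varrho}<\delta\leq p_{\omega^{-}}r_{\omega^{-}}^{\varrho}\}$, where $\omega^{-}$ is obtained from $\omega$ by deleting the last letter; its elements satisfy $p_{\omega}r_{\omega}^{\varrho}\geq c_{*}\delta$ with $c_{*} := \min_{i}p_{i}r_{i}^{\varrho}$. By \reftext{Example~\ref{exa:IFS}}, $s_{\varrho}$ is the unique solution of $\sum_{i}(p_{i}r_{i}^{\varrho})^{s}=1$, and iterating this identity along the antichain-partition produces the Moran-type relation $\sum_{\omega\in\mathcal{W}_{\delta}}(p_{\omega}r_{\omega}^{\varrho})^{s_{\varrho}}=1$; since each summand is at most $\delta^{s_{\varrho}}$, this forces $|\mathcal{W}_{\delta}|\geq \delta^{-s_{\varrho}}$. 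Choosing $\delta_{n} := n^{-1/s_{\varrho}}$ guarantees $|\mathcal{W}_{\delta_{n}}|\geq n$, and feeding $G := \spann\{u_{\omega}:\omega\in\mathcal{W}_{\delta_{n}}\}$ into \reftext{Proposition~\ref{prop:max-min}} will produce $\lambda_{n}^{\nu}\geq \lambda_{|\mathcal{W}_{\delta_{n}}|}^{\nu}\geq \lambda_{0}c_{*}n^{-1/s_{\varrho}}$, whence $\liminf_{n}\log\lambda_{n}^{\nu}/\log n\geq -1/s_{\varrho}$, which closes the proof. The delicate point I anticipate is the clean verification of $J_{\nu}(u_{\omega})=p_{\omega}J_{\nu}(u_{0})$, which requires an iterated self-similarity argument together with $\nu(\partial O)=0$ to absorb boundary overlaps between distinct cylinders; the combinatorics of the final step is then just a direct Moran-type computation.
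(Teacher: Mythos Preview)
Your proposal is correct and follows essentially the same route as the paper's proof: the same stopping-time antichain $\{\omega:p_{\omega}r_{\omega}^{\varrho}<\delta\leq p_{\omega^{-}}r_{\omega^{-}}^{\varrho}\}$, the same rescaled bump functions $u_{\omega}=u_{0}\circ T_{\omega}^{-1}$ with the scaling identities $I_{\ell}(u_{\omega})=r_{\omega}^{m-2\ell}I_{\ell}(u_{0})$ and $J_{\nu}(u_{\omega})=p_{\omega}J_{\nu}(u_{0})$, the Moran-type count $|\mathcal{W}_{\delta}|\geq\delta^{-s_{\varrho}}$, and then the max-min principle to close the lower bound against the already-established upper bound. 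Your write-up is in fact slightly more explicit than the paper's about the passage between $\mathscr{S}H_{0}^{\ell}$ and $\mathscr{S}H^{\ell}$ and about why $\nu(\partial O)=0$ is needed for the $J_{\nu}$-identity.
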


\begin{proof}
Here, we follow partially \citep{MR1338787}. For $t$ with
$0<t<\min p_{i}^{s_{\varrho }}r_{i}^{(2\ell -m)s_{\varrho }}$ let
\begin{equation*}
E_{t} :=  \left \{ \omega \in I^{*}\mid p_{\omega}r_{\omega}^{2
\ell -m}<t\leq p_{\omega ^{-}}r_{\omega ^{-}}^{2\ell -m}\right \} ,
\end{equation*}
where
$I^{*}=\bigcup _{l\in \mathbb N}\left \{ 1,\ldots ,n\right \} ^{l}$ and
$p_{\omega}r_{\omega}^{2\ell -m} :=  \prod _{i=1}^{k}p_{\omega _{i}}r_{
\omega _{i}}^{2\ell -m}$,
$\omega =\omega _{1}\cdot \ldots \cdot \omega _{k}\in I^{*}$,
$k\in \mathbb N$. Then, by our assumption
$\nu \left (\partial O\right )=0$, for all $\omega \in E_{t}$, it follows
that $\nu \left (\partial T_{\omega}(O)\right )=0$ and
\begin{equation*}
\nu \left (\bigcup _{\omega \in E_{t}}T_{\omega}(O)\right )=1
\text{\:and\:}T_{\omega}(O)\cap T_{\omega '}(O)=\varnothing
\text{\:for all\:}\omega ,\omega '\in E_{t}\:\text{with}\:\omega \neq
\omega '.
\end{equation*}
Hence,
\begin{equation*}
\sum _{\omega \in E_{t}}p_{\omega}^{s_{\varrho }}r_{\omega}^{(2\ell -m)s_{
\varrho }}=1\leq t^{s_{\varrho }}\card \left (E_{t}\right ).
\end{equation*}
Fix $a\in K$ and choose $u_{0}\in \mathcal{C}_{c}^{\infty}(O)$ such that
$u_{0}(a)>0$. For $\omega \in E_{t}$, we set
\begin{equation*}
u_{\omega}(x) :=
\begin{cases}
u_{0}\left (T_{\omega}^{-1}(x)\right ) & ,x\in T_{\omega}(O)
\\
0 & ,x\in [0,1]^{d}\setminus T_{\omega}(O)
\end{cases}
.
\end{equation*}
Then
$u_{\omega}\in \mathcal{C}_{c}^{\infty}\left (T_{\omega}(O)\right )$. Since
the supports of $(u_{\omega})_{\omega \in E_{t}}$ are disjoint, it follows
that the $(u_{\omega})_{\omega \in E_{t}}$ are mutually orthogonal both
in $L_{\nu}^{2}$ and in $H_{0}^{\ell}$, and
$\spann (u_{\omega}:\omega \in E_{t})$ is therefore a
$\card (E_{t})$-dimensional subspace of $H_{0}^{\ell}$. Moreover, we have
\begin{equation*}
J_{\nu}\left (u_{\omega}\right )=p_{\omega}\int u_{0}^{2}\;\mathrm d
\nu \:\text{and}\:I_{\ell}\left (u_{\omega}\right )=r_{\omega}^{d-2
\ell}I_{\ell}\left (u_{0}\right ).
\end{equation*}
We obtain
\begin{equation*}
\frac{J_{\nu}\left (u_{\omega}\right )}{I_{\ell}\left (u_{\omega}\right )}=r_{
\omega}^{2\ell -m}p_{\omega}
\underbrace{\frac{J_{\nu}\left (u_{0}\right )}{I_{\ell}\left (u_{0}\right )}}_{
 =:  R}.
\end{equation*}
Now, for
$u=\sum _{\omega \in E_{t}}c_{\omega}u_{\omega}\in H_{0}^{\ell}
\setminus \left \{ 0\right \} $ with $c_{\omega}\in \mathbb{R}$. we have
\begin{equation*}
\frac{J_{\nu}\left (u_{\omega}\right )}{I_{\ell}\left (u_{\text{$\omega$}}\right )}=
\frac{\sum _{\omega \in E_{t}}c_{\omega}^{2}J_{\nu}\left (u_{\omega}\right )}{\sum _{\omega \in E_{t}}c_{\omega}^{2}I_{\ell}\left (u_{\omega}\right )}=R
\frac{\sum _{\omega \in E_{t}}c_{\omega}^{2}p_{\omega}r_{\omega}^{2\ell -m}I_{\ell}\left (u_{\omega}\right )}{\sum _{\omega \in E_{t}}c_{\omega}^{2}I_{\ell}\left (u_{\omega}\right )}
\geq tR\min p_{i}r_{i}^{2\ell -m}.
\end{equation*}
The min-max principle stated in \reftext{Proposition~\ref{prop:max-min}} gives
\begin{equation*}
tR\min p_{i}r_{i}^{2\ell -m}\leq \lambda _{\card (E_{t})}^{\nu}\leq
\lambda _{\left \lfloor t^{-s_{\varrho }}\right \rfloor }^{\nu}.
\end{equation*}
In particular, for $t=l^{-1/s_{\varrho }}$ and $l\in \mathbb N$ large,
\begin{equation*}
l^{-1/s_{\varrho }}R\min p_{i}r_{i}^{2\ell -m}\leq \lambda _{l}^{\nu},
\end{equation*}
Combining this with \textup{\reftext{(\ref{eq:LqUpperbound})}}, gives
\begin{equation*}
-\frac{1}{s_{\varrho }}\leq \liminf _{l\rightarrow \infty}
\frac{\log \left (\lambda _{l}^{\nu}\right )}{\log (l)}\leq \limsup _{m
\rightarrow \infty}
\frac{\log \left (\lambda _{l}^{\nu}\right )}{\log (l)}\leq -
\frac{1}{s_{\varrho }}.\qedhere
\end{equation*}
\end{proof}

\subsection{One-dimensional Kre\u{\i}n--Feller operators}
\label{sec:Krein-Feller-operators-in}

In this final section, we show that the spectral problem of the Kre\u{\i}n--Feller
operator in dimension one is actually equivalent to the spectral problem
of polyharmonic operator $T_{\nu}$ (excluding the eigenvalue zero) for
the case $\ell =m=1$ and $p=q=2$. Let us recall the general setting for
the one-dimensional Kre\u{\i}n--Feller Operator with respect to the finite
Borel measure $\nu $ on $\left (0,1\right )$. We set
\begin{equation*}
\mathcal{C}_{\nu}([0,1]) :=  \left \{ f\in \mathcal{C}([0,1])
\mid f\:\text{\text{is}\:affine\:linear\:on\:the\:components\:of\:}[0,1]
\setminus \supp (\nu )\right \}
\end{equation*}
and
$\dom (\mathcal{E}_{\nu}) :=  H_{0}^{1}\cap \mathcal{C}_{\nu}([0,1])$
with Dirichlet form
$\mathcal{E}_{\nu}\left (f,g\right ) :=  \int _{(0,1)}\nabla f
\nabla u\;\mathrm d\Lambda $. Now, we consider the spectral problem of
the classical Kre\u{\i}n--Feller operator considered in
\citep{KN21,KN2022,MR2828537}. We call
$u\in \dom (\mathcal{E}_{\nu})\setminus \left \{ 0\right \} $ an\emph{ eigenfunction
}with \emph{eigenvalue} $\lambda $ if
%
\begin{equation}
\int _{(0,1)}\nabla f\nabla u\;\mathrm d\Lambda =\lambda \int fu\;
\mathrm d\nu ,
\label{eq:Eigenfunction}
\end{equation}
for all $f\in \dom (\mathcal{E}_{\nu})$. We need a decomposition
\begin{equation*}
[0,1]\setminus \supp (\nu ) =:  A_{1}\cup A_{2}\cup \bigcup _{i
\in I}(a_{i},b_{i}),
\end{equation*}
where $I\subset \mathbb N$, $A_{1} :=  [0,d_{1})$ if
$0\notin \supp (\nu )$ otherwise $A_{1}=\varnothing $,
$A_{2} :=  (d_{2},1]$ if $1\notin \supp (\nu )$ otherwise
$A_{2}=\varnothing $, and the intervals $[0,c_{1})$, $(c_{2},1]$,
$(a_{i},b_{i})$, $i\in I$, are mutually disjoint.

The following Lemma will provide a map from $H_{0}^{1}$ to
$\dom \left (\mathcal{E}_{\nu}\right )$.
%
\begin{lem}[{\citep[Lemma 2.1]{KN2022}}]
\label{lem:RepraesentantL2NU}%
The map
$\tau _{\nu}:H_{0}^{1}\rightarrow \dom \left (\mathcal{E}_{\nu}
\right )$
\begin{equation*}
\tau _{\nu}(f)(x) :=
\begin{cases}
f(a_{i})+\frac{f(b_{i})-f(a_{i})}{b_{i}-a_{i}}\left (x-a_{i}\right ), &
x\in (a_{i},b_{i}),\:i\in N,
\\
f(x), & x\in \supp (\nu ),
\\
\frac{f(c_{1})}{c_{1}}(x), & x\in [0,c_{1}),\:0\notin \supp (\nu ),
\\
\frac{f(c_{2})}{1-c_{2}}\left (1-x\right ), & x\in (c_{2},1],\:1
\notin \supp (\nu ),
\end{cases}
\end{equation*}
is surjective, $\tau _{\nu}(f)=f$ as elements of $L_{\nu}^{2}$, and we
have
\begin{equation*}
\nabla \tau _{\nu}(f)(x) :=
\begin{cases}
\frac{f(b_{i})-f(a_{i})}{b_{i}-a_{i}}, & x\in (a_{i},b_{i}),\ i\in N,
\\
\nabla f(x), & x\in \supp (\nu ),
\\
\frac{f(c_{1})}{c_{1}}, & x\in [0,c_{1}),\ 0\notin \supp (\nu ),
\\
-\frac{f(c_{2})}{1-c_{2}}, & x\in (c_{2},1],\ 1\notin \supp (\nu ).
\end{cases}
\end{equation*}
\end{lem}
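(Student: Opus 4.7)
The plan is to verify in turn the three assertions — that $\tau_\nu$ maps into $\dom(\mathcal{E}_\nu)$, is surjective, agrees with the identity in $L_\nu^2$ — and to justify the weak-derivative formula along the way.

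First I would observe that any $f \in H_0^1(0,1)$ has a continuous representative that vanishes at $0$ and $1$, since in dimension one with $\ell p/m = 2 > 1$ the Sobolev embedding is into $\mathcal{C}(\overline{(0,1)})$. Then $\tau_\nu(f)$ is unambiguously defined pointwise and is continuous on $[0,1]$: the piecewise definition agrees with $f$ at each endpoint $a_i, b_i, c_1, c_2$, so the pieces join continuously. The boundary values are correct, either because $0,1 \in \supp(\nu)$ and then $\tau_\nu(f)(0)=f(0)=0$ and similarly at $1$, or because the affine boundary formulas explicitly force the value at $0$ respectively $1$ to vanish. By construction $\tau_\nu(f)$ restricts to an affine linear function on each component of $[0,1]\setminus \supp(\nu)$, so it satisfies the structural requirement of $\mathcal{C}_\nu([0,1])$.

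Next I would establish the stated gradient formula and the membership $\tau_\nu(f) \in H_0^1$ simultaneously. Let $g$ denote the piecewise function on the right-hand side of the second display. On each gap $(a_i,b_i)$ the function $\tau_\nu(f)$ is affine with constant slope $(f(b_i)-f(a_i))/(b_i-a_i)$, and on $\supp(\nu)$ it coincides with $f$, so inside each of these sets $g$ agrees with the classical derivative. Testing $\tau_\nu(f)$ against $\varphi \in \mathcal{C}_c^\infty(0,1)$ and integrating by parts on each component separately, the boundary terms telescope at every shared endpoint because $\tau_\nu(f)$ is continuous and agrees with $f$ there, yielding $\int \varphi' \tau_\nu(f)\,\mathrm{d}\Lambda = -\int \varphi g\,\mathrm{d}\Lambda$. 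To ensure $g \in L^2(0,1)$, I would apply the Jensen/Cauchy--Schwarz bound
\[
\frac{\left(f(b_i)-f(a_i)\right)^2}{b_i-a_i} = \frac{\left(\int_{a_i}^{b_i}\nabla f\,\mathrm{d}\Lambda\right)^2}{b_i-a_i} \leq \int_{a_i}^{b_i}|\nabla f|^2\,\mathrm{d}\Lambda,
\]
and the analogous estimate on the boundary pieces $[0,c_1)$ and $(c_2,1]$. Summing over $i \in I$ and the at most two boundary components gives $\|g\|_{L^2}^2 \leq \|\nabla f\|_{L^2}^2$, so $\tau_\nu(f) \in H^1(0,1)$ with $\nabla \tau_\nu(f) = g$; combined with the boundary values above this places $\tau_\nu(f)$ in $H_0^1 \cap \mathcal{C}_\nu([0,1]) = \dom(\mathcal{E}_\nu)$.

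Finally, the identity $\tau_\nu(f) = f$ in $L_\nu^2$ is immediate: the two functions agree pointwise on $\supp(\nu)$, and $\nu$ charges nothing outside. For surjectivity, given $g \in \dom(\mathcal{E}_\nu)$ the function $g$ is already affine linear on each component of $[0,1]\setminus \supp(\nu)$, so on every gap $(a_i,b_i)$ the linear interpolation between $g(a_i)$ and $g(b_i)$ coincides with $g$ itself, and likewise for the boundary pieces (where $g(0)=0$, $g(1)=0$ pin down the affine extension). Hence $\tau_\nu(g) = g$, exhibiting $g$ as an image point.

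The only step that requires genuine care is the weak-derivative identity across the countably many gaps $(a_i,b_i)_{i \in I}$: one must confirm that boundary contributions from integration by parts on each component cancel against those on neighbouring components and that the resulting series converges, which is exactly what the continuity of $f$ and the Jensen-type slope bound above secure. Everything else is bookkeeping.
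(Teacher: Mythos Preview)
Your proof is correct. The paper itself does not supply a proof of this lemma---it is quoted from \citep[Lemma 2.1]{KN2022} and stated without argument---so there is no in-paper proof to compare against.

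One point worth tightening: the phrase ``integrating by parts on each component separately, the boundary terms telescope'' is slightly loose when $\supp(\nu)$ is not a finite union of intervals (e.g.\ a Cantor-type set), since $\supp(\nu)$ cannot then serve as an integration-by-parts domain in its own right. A clean remedy is to work instead with the difference $\tau_\nu(f)-f$, which vanishes on $\supp(\nu)$ and lies in $H_0^1(a_i,b_i)$ on each gap, so integration by parts can be carried out gap-by-gap with vanishing boundary terms and the contribution over $\supp(\nu)$ is handled by the known weak derivative of $f$. Alternatively, one checks directly that $\tau_\nu(f)(x)=\int_0^x g\,\mathrm d\Lambda$ for every $x\in[0,1]$ (using that $\int_{a_i}^{b_i} g\,\mathrm d\Lambda = f(b_i)-f(a_i) = \int_{a_i}^{b_i}\nabla f\,\mathrm d\Lambda$ on each gap), which identifies $g$ as the weak derivative without any telescoping argument. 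The remaining steps---continuity, boundary values, the Cauchy--Schwarz bound on the slopes giving $g\in L^2$, and surjectivity via $\tau_\nu|_{\dom(\mathcal{E}_\nu)}=\id$---are all fine as written.
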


\begin{lem}
\label{lem:AequivalenzSolo}%
We have $\varphi \in \dom (\mathcal{E}_{\nu})$ is an eigenfunction
of \textup{\reftext{(\ref{eq:Eigenfunction})}} with eigenvalue $\lambda $, if and
only if, for all $g\in H_{0}^{1}$, we have
\begin{equation*}
\int _{[0,1]}\nabla \varphi \nabla g\;\mathrm d\Lambda =\lambda
\cdot \int \varphi g\;\mathrm d\nu .
\end{equation*}
\end{lem}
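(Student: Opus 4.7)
The statement is a bi-implication, and the easy direction is the reverse one: since $\dom(\mathcal{E}_\nu)=H_0^1\cap\mathcal{C}_\nu([0,1])\subset H_0^1$, testing against $g\in H_0^1$ in particular covers all $f\in\dom(\mathcal{E}_\nu)$, so $(\Leftarrow)$ follows immediately from the definition of eigenfunction in \eqref{eq:Eigenfunction}. The whole content of the lemma is therefore the forward direction, which asserts that an eigenfunction can be tested against \emph{all} $H_0^1$ functions and not merely those lying in $\mathcal{C}_\nu([0,1])$.

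For $(\Rightarrow)$ my plan is to use the surjective operator $\tau_\nu:H_0^1\to\dom(\mathcal{E}_\nu)$ provided by \reftext{Lemma~\ref{lem:RepraesentantL2NU}} as the bridge. Given an arbitrary test function $g\in H_0^1$, I set $f:=\tau_\nu(g)\in\dom(\mathcal{E}_\nu)$ and apply the eigenfunction equation with this $f$. Since $\tau_\nu(g)=g$ as elements of $L_\nu^2$, the right-hand side satisfies $\int \varphi\tau_\nu(g)\,\mathrm d\nu=\int \varphi g\,\mathrm d\nu$, and it only remains to show
\begin{equation*}
\int_{[0,1]}\nabla\varphi\,\nabla\tau_\nu(g)\,\mathrm d\Lambda=\int_{[0,1]}\nabla\varphi\,\nabla g\,\mathrm d\Lambda.
\end{equation*}

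The key observation for establishing this identity is that $\varphi\in\mathcal{C}_\nu([0,1])$ is affine on each component of $[0,1]\setminus\supp(\nu)$, so $\nabla\varphi$ is constant on each such interval. Decomposing the integration domain along $[0,1]=\supp(\nu)\cup A_1\cup A_2\cup\bigcup_{i\in I}(a_i,b_i)$, the contribution from $\supp(\nu)$ vanishes because $\nabla\tau_\nu(g)=\nabla g$ there by the explicit formula in \reftext{Lemma~\ref{lem:RepraesentantL2NU}}. On a gap $(a_i,b_i)$, using that $\nabla\varphi$ is the constant $(\varphi(b_i)-\varphi(a_i))/(b_i-a_i)$, I would pull it out of the integral and use the fundamental theorem of calculus $\int_{a_i}^{b_i}\nabla g\,\mathrm d\Lambda=g(b_i)-g(a_i)$ together with the fact that $\int_{a_i}^{b_i}\nabla\tau_\nu(g)\,\mathrm d\Lambda$ equals the same value by construction of $\tau_\nu$, so the two contributions cancel. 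The boundary intervals $A_1=[0,c_1)$ and $A_2=(c_2,1]$ are handled analogously, exploiting $g(0)=g(1)=0$ from $g\in H_0^1$.

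I do not anticipate a real obstacle here; the only point that requires mild care is the identification $\int_{a_i}^{b_i}\nabla g\,\mathrm d\Lambda=g(b_i)-g(a_i)$ for a merely weakly differentiable $g$, which is valid because $H^1(a_i,b_i)$ functions in one dimension admit absolutely continuous representatives. Once that is granted, the computation is a direct check on each piece of the decomposition and sums to zero, yielding the desired equality.
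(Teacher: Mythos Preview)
Your proposal is correct and follows essentially the same route as the paper: both reduce the identity $\int\nabla\varphi\,\nabla g\,\mathrm d\Lambda=\int\nabla\varphi\,\nabla\tau_\nu(g)\,\mathrm d\Lambda$ to a gap-by-gap computation exploiting that $\nabla\varphi$ is constant on each component of $[0,1]\setminus\supp(\nu)$, together with $\tau_\nu(g)=g$ in $L^2_\nu$ for the right-hand side. The paper's proof in fact assumes $0,1\in\supp(\nu)$ for simplicity and only writes out the forward direction, so your version is slightly more complete.
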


\begin{proof}
Recall,
$[0,1]\setminus \supp (\nu )=A_{1}\cup A_{2}\cup \bigcup _{i\in I}(a_{i},b_{i})$
and define
$c_{i}=\nabla \varphi \left (\frac{a_{i}+b_{i}}{2}\right )$. For simplicity
we assume $0,1\in \supp (\nu )$ and let $g\in H_{0}^{1}$ be, then we have
\begin{align*}
\int _{[0,1]}\nabla \varphi \nabla g\;\mathrm d\text{$\Lambda$} & =
\int _{\supp (\nu )}\nabla \varphi \nabla g\;\mathrm d
\text{$\Lambda$}+\sum _{i\in I}\int _{(a_{i},b_{i})}c_{i}\nabla g\;
\mathrm d\Lambda
\\
& =\int _{\supp (\nu )}\nabla \varphi \nabla g\;\mathrm d
\text{$\Lambda$}+\sum _{i\in I}c_{i}(g(b_{i})-g(a_{i}))
\\
& =\int _{\supp (\nu )}\nabla \varphi \nabla g\;\mathrm d
\text{$\Lambda$}+\sum _{i\in I}c_{i}(b_{i}-a_{i})\left (
\frac{g(b_{i})-g(a_{i})}{b_{i}-a_{i}}\right )
\\
& =\int _{\supp (\nu )}\nabla \varphi \nabla \tau (g)\;\mathrm d
\text{$\Lambda$}+\sum _{i\in I}\int _{(a_{i},b_{i})}\nabla \varphi
\nabla \tau (g)\;\mathrm d\Lambda
\\
& =\int _{(0,1)}\nabla \varphi \nabla \tau (g)\;\mathrm d
\text{$\Lambda$}=\lambda \int fg\;\mathrm d\text{$\nu$.}\qedhere
\end{align*}
\end{proof}
%
\begin{prop}
\label{Prop:SolomanyakOperator}%
Let $\varphi $ be an eigenfunction of $T_{\nu}$ with eigenvalue
$\lambda >0$. Then $\varphi $ is affine linear on the connected components
of $[0,1]\setminus \supp (\nu )$.
\end{prop}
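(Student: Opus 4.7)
The plan is to test the eigenfunction equation against smooth functions supported inside each connected component of $[0,1] \setminus \supp(\nu)$. By definition of $T_\nu$ (with $\ell = m = 1$ and $p = q = 2$), the eigenfunction $\varphi \in H_0^1$ satisfies
\[
\int \varphi g \, \mathrm d\nu \;=\; \lambda \int_{(0,1)} \varphi'\, g'\, \mathrm d\Lambda \qquad \text{for every } g \in H_0^1.
\]

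First I would fix a connected component $(a,b) \subset [0,1] \setminus \supp(\nu)$ and take an arbitrary $g \in \mathcal{C}_c^\infty((a,b))$, extended by zero to the whole interval; the extension lies in $H_0^1$. Since $\supp g$ is disjoint from $\supp \nu$, the left-hand side of the eigenvalue equation vanishes, and dividing by $\lambda > 0$ yields
\[
\int_{(a,b)} \varphi'\, g'\, \mathrm d\Lambda \;=\; 0 \qquad \text{for every } g \in \mathcal{C}_c^\infty((a,b)).
\]
This is precisely the statement that the distributional derivative of $\varphi'$ on $(a,b)$ vanishes, so $\varphi'$ is almost everywhere constant on $(a,b)$. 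Combined with the continuity of $\varphi$ (guaranteed by the compact embedding $H_0^1 \hookrightarrow \mathcal{C}([0,1])$ from the standing assumption $2\ell/m > 1$), the continuous representative of $\varphi$ must be affine linear on $(a,b)$.

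The half-open end components $[0,d_1)$ and $(d_2,1]$ (which arise when $0$ or $1$ lies outside $\supp \nu$) are handled identically with test functions $g \in \mathcal{C}_c^\infty((0,d_1))$ or $g \in \mathcal{C}_c^\infty((d_2,1))$; the conclusion extends up to the boundary by continuity, and the $H_0^1$-boundary conditions $\varphi(0) = 0$ and $\varphi(1) = 0$ in fact pin down the affine function to be of the form $c\,x$ or $c\,(1-x)$ on these components.

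I do not anticipate any real obstacle here: the argument is the one-dimensional analogue of the elementary fact that a weak solution of $-u'' = 0$ is affine, and restricting the test functions to a single component decouples the eigenvalue equation from the measure $\nu$ entirely. The only minor point requiring care is that extensions by zero of $\mathcal{C}_c^\infty$ test functions genuinely lie in $H_0^1$, which is immediate.
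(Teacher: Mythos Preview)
Your argument is correct and follows the same overall strategy as the paper: test the eigenvalue identity against functions supported inside a gap $(a,b)\subset[0,1]\setminus\supp(\nu)$ so that the $\nu$-integral drops out, and conclude that $\varphi'$ is constant on $(a,b)$. The only difference lies in how the constancy of $\varphi'$ is extracted. You use the standard distributional route: $\int_{(a,b)}\varphi' g'\,\mathrm d\Lambda=0$ for all $g\in\mathcal{C}_c^\infty((a,b))$ means the weak second derivative of $\varphi$ vanishes on $(a,b)$, hence $\varphi'$ is a.e.\ constant. The paper instead builds explicit trapezoidal test functions in $H_0^1$ centred at two arbitrary points $x_1,x_2\in(a,b)$, so that the resulting identity reads
\[
\frac{1}{2\delta}\int_{(x_1-\delta,x_1+\delta)}\nabla\varphi\,\mathrm d\Lambda
=\frac{1}{2\delta}\int_{(x_2-\delta,x_2+\delta)}\nabla\varphi\,\mathrm d\Lambda,
\]
and then applies the Lebesgue differentiation theorem to obtain $\nabla\varphi(x_1)=\nabla\varphi(x_2)$ almost everywhere. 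Your version is the cleaner textbook argument; the paper's is more hands-on and avoids any appeal to distribution theory, at the cost of a slightly longer computation.
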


\begin{proof}
Here we closely follow \citep[Proposition 3.2]{MR4241300}. Let
$(a,b)$ be a component $[0,1]\setminus \supp (\nu )$. Then we have for
all $f\in H_{0}^{1}$
\begin{equation*}
\lambda \int _{[0,1]}\nabla f\nabla \varphi \;\mathrm d\Lambda =\int f
\varphi \;\mathrm d\nu .
\end{equation*}
For $x_{1}x_{2}\in (a,b)$, $x_{1}<x_{2}$ and for all $\delta >0$ sufficiently
small such that
\begin{equation*}
a<x_{1}-\delta <x_{1}<x_{1}+\delta <x_{2}-\delta <x_{2}<x_{2}+\delta <b,
\end{equation*}
we have that
\begin{equation*}
g:x\mapsto \frac{x-\left (x_{1}-\delta \right )}{2\delta}\mathbbm{1}_{(x_{1}-
\delta ,x_{1}+\delta )}(x)+\mathbbm{1}_{\left [x_{1}+\delta ,x_{2}-
\delta \right ]}(x)+\frac{x_{2}+\delta -x}{2\delta}\mathbbm{1}_{(x_{2}-
\delta ,x_{2}+\delta )}(x).
\end{equation*}
defines an element in $H_{0}^{1}$. Hence, we obtain
\begin{align*}
0 & =\int _{[0,1]}\nabla g\nabla \varphi \;\mathrm d\Lambda =
\frac{1}{2\delta}\int _{(x_{1}-\delta ,x_{1}+\delta )}\nabla \varphi
\;\mathrm d\Lambda -\frac{1}{2\delta}\int _{(x_{2}-\delta ,x_{2}+
\delta )}\nabla \varphi \;\mathrm d\Lambda .
\end{align*}
The Lebesgue differentiation theorem forces $\nabla \varphi (x)=c$ almost
everywhere. For all $x\in (a,b)$, we obtain
\begin{equation*}
\varphi (x)=\varphi (a)+\int _{[a,x]}\nabla \varphi \;\mathrm d
\Lambda =\varphi (a)+c(x-a).
\end{equation*}
The following proposition shows that the equivalence of the spectral problems.
\end{proof}
%
\begin{prop}
\label{prop:_ProblemeAequivlaent}
We have that $\lambda >0$ is an eigenvalue of $T_{\nu}$ if and only if
$1/\lambda $ is an eigenvalue of \textup{\reftext{(\ref{eq:Eigenfunction})}}.
\end{prop}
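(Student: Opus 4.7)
My plan is to prove the two implications separately, using \reftext{Proposition~\ref{Prop:SolomanyakOperator}} for the forward direction and \reftext{Lemma~\ref{lem:AequivalenzSolo}} for the backward direction; in the setting $\ell=m=1$ the bilinear forms $I_{1}(u,w)=\int\nabla u\nabla w\,\mathrm{d}\Lambda$ and $J_{\nu}(u,w)=\int uw\,\mathrm{d}\nu$ make the two eigenvalue equations look structurally identical, so the task is essentially to verify that the domains of test functions match up correctly and that the appropriate regularity of the eigenfunctions is in place.

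For the forward direction, suppose $f\in H_{0}^{1}\setminus\{0\}$ is an eigenfunction of $T_{\nu}$ with eigenvalue $\lambda>0$, so that $J_{\nu}(f,g)=\lambda I_{1}(f,g)$ for every $g\in H_{0}^{1}$. Dividing by $\lambda$, I would rewrite this as
\begin{equation*}
\int_{[0,1]}\nabla f\,\nabla g\,\mathrm{d}\Lambda=\frac{1}{\lambda}\int fg\,\mathrm{d}\nu,\qquad g\in H_{0}^{1}.
\end{equation*}
By \reftext{Proposition~\ref{Prop:SolomanyakOperator}}, $f$ is affine linear on each connected component of $[0,1]\setminus\supp(\nu)$, hence $f\in\mathcal{C}_{\nu}([0,1])$ and therefore $f\in H_{0}^{1}\cap\mathcal{C}_{\nu}([0,1])=\dom(\mathcal{E}_{\nu})$. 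Restricting the identity above to test functions $g\in\dom(\mathcal{E}_{\nu})\subset H_{0}^{1}$ yields exactly \reftext{(\ref{eq:Eigenfunction})} with eigenvalue $1/\lambda$, so $f$ is an eigenfunction of the Kre\u{\i}n--Feller operator with eigenvalue $1/\lambda$.

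For the converse, let $\varphi\in\dom(\mathcal{E}_{\nu})\setminus\{0\}$ satisfy \reftext{(\ref{eq:Eigenfunction})} with some eigenvalue $\mu>0$; set $\lambda:=1/\mu$. By \reftext{Lemma~\ref{lem:AequivalenzSolo}}, the test-function space in \reftext{(\ref{eq:Eigenfunction})} may be enlarged from $\dom(\mathcal{E}_{\nu})$ to all of $H_{0}^{1}$, giving
\begin{equation*}
\int_{[0,1]}\nabla\varphi\,\nabla g\,\mathrm{d}\Lambda=\mu\int\varphi g\,\mathrm{d}\nu,\qquad g\in H_{0}^{1}.
\end{equation*}
Multiplying by $\lambda=1/\mu$ rewrites this as $J_{\nu}(\varphi,g)=\lambda I_{1}(\varphi,g)$ for every $g\in H_{0}^{1}$, which is precisely the defining relation (\reftext{Definition~\ref{defn4.1}}) for $\varphi$ being an eigenfunction of $T_{\nu}$ with eigenvalue $\lambda$. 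Since $\varphi\in\dom(\mathcal{E}_{\nu})\subset H_{0}^{1}\setminus\{0\}$, no additional regularity check is required.

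The only substantive obstacle is the forward direction: one must know a priori that a $T_{\nu}$-eigenfunction associated to a nonzero eigenvalue automatically lies in $\dom(\mathcal{E}_{\nu})$, i.e.\ is affine on the complementary intervals of $\supp(\nu)$. This is exactly what \reftext{Proposition~\ref{Prop:SolomanyakOperator}} provides, and the hypothesis $\lambda>0$ is used both to invoke that proposition (where $\lambda=0$ would collapse the argument) and to ensure that $1/\lambda$ makes sense as an eigenvalue on the Kre\u{\i}n--Feller side. Everything else is a straightforward matching of the two variational identities.
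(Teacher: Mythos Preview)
Your proposal is correct and follows essentially the same route as the paper: the forward direction uses \reftext{Proposition~\ref{Prop:SolomanyakOperator}} to place the $T_{\nu}$-eigenfunction in $\dom(\mathcal{E}_{\nu})$ and then restricts test functions, while the backward direction uses \reftext{Lemma~\ref{lem:AequivalenzSolo}} to enlarge the test space from $\dom(\mathcal{E}_{\nu})$ to all of $H_{0}^{1}$. Your explicit remarks on where the hypothesis $\lambda>0$ is used are a welcome clarification.
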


\begin{proof}
Let $\varphi $ be an eigenfunction of $T_{\nu}$ with eigenvalue
$\lambda >0$. Using \ref{Prop:SolomanyakOperator} it follows
$\varphi \in \dom \left (\mathcal{E}_{\nu}\right )$ and by definition we
have for all
$f\in \dom \left (\mathcal{E}_{\nu}\right )\subset H_{0}^{1}$
\begin{equation*}
\lambda \int _{[0,1]}\nabla f\nabla \varphi \;\mathrm d\Lambda =\int f
\varphi \;\mathrm d\nu .
\end{equation*}
Hence, $\varphi $ is an eigenfunction of
\textup{\reftext{(\ref{eq:Eigenfunction})}} with eigenvalue $1/\lambda $.

Reversely, let $\varphi \in \dom (\mathcal{E}_{\nu})$ be an eigenfunction
of \textup{\reftext{(\ref{eq:Eigenfunction})}} with eigenvalue $\lambda $. Then it
follows $\lambda >0$ and by \reftext{Lemma~\ref{lem:AequivalenzSolo}} we have for
all $f\in H_{0}^{1}$
\begin{equation*}
\int _{[0,1]}\nabla f\nabla \varphi \;\mathrm d\Lambda =\lambda \int f
\varphi \;\mathrm d\nu ,
\end{equation*}
which shows that $\varphi $ is an eigenfunction of $T_{\nu}$ with eigenvalue
$1/\lambda $.
\end{proof}
We end this section by using the above observation to give a short proof
of the sub-/superadditivity of the eigenvalue counting function announced
in the introduction.

Now, for $d_{0}=0<d_{1}<\dots <d_{n}<d_{n+1}=1$ with
$\nu \left (\left \{ d_{k}\right \} \right )=0$, we define the following
closed subspace of $H_{0}^{1}$ given by
\begin{equation*}
F :=  \left \{ u\in H_{0}^{1}:u(d_{i})=0,i\in \left \{ 1,\dots ,n
\right \} \right \} .
\end{equation*}
Note that $F$ can be identified with
\begin{align*}
F & \simeq H_{0}^{1}\left (\left (d_{0},d_{1}\right )\right )\times H_{0}^{1}
\left (\left (d_{2},d_{3}\right )\right )\times \dots \times H_{0}^{1}
\left (\left (d_{n},d_{n+1}\right ).\right )
\end{align*}
Furthermore, let $T_{k,\nu}$ denote the operator on
$H_{0}^{1}\left (\left (d_{k},d_{k+1}\right )\right )$ with respect to
the form
$\left (f,g\right )\mapsto \int _{\left (d_{k},d_{k+1}\right )}fg\;
\mathrm d\nu $ and let $T_{F,\nu}$ be the operator on $F$ with respect
to the form $\left (f,g\right )\mapsto \int fg\;\mathrm d\nu $. We define
the eigenvalue counting function of
$S\in \left \{ T_{\text{$\nu$}},T_{k,\nu},T_{F,\nu}\right \} $ by
\begin{equation*}
N\left (x,S\right ) :=  \card \left \{ n\in \mathbb N:\lambda _{n}^{S}
\geq x\right \} ,\:x>0.
\end{equation*}
Then the following sub-/superadditivity holds true.
%
\begin{prop}
\label{prop:Addiitivity}
For all $x\geq 0$, we have
\begin{equation*}
\sum _{k=0}^{n}N\left (x,T_{k,\nu}\right )=N\left (x,T_{F,\nu}\right )
\leq N\left (x,T_{\nu}\right )\leq \sum _{k=0}^{n}N\left (x,T_{k,\nu}
\right )+n.
\end{equation*}
\end{prop}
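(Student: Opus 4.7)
The plan is to establish the equality first and then the two inequalities separately, all via the max-min characterization of \reftext{Proposition~\ref{prop:max-min}} specialized to $\ell=1$.

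For the equality $\sum_{k=0}^{n}N(x,T_{k,\nu})=N(x,T_{F,\nu})$, I exploit the product structure $F\simeq\prod_{k=0}^{n}H_{0}^{1}((d_{k},d_{k+1}))$ already noted in the excerpt. The crucial point is that both bilinear forms split as orthogonal sums along this decomposition: for $I_{1}$ this is immediate because gradients supported on disjoint intervals are orthogonal in $L^{2}$, and for $J_{\nu}$ this is where the hypothesis $\nu(\{d_{k}\})=0$ enters, since then $\int_{(0,1)}uv\,\mathrm{d}\nu=\sum_{k}\int_{(d_{k},d_{k+1})}uv\,\mathrm{d}\nu$ with no contribution from the division points. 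Consequently $T_{F,\nu}$ is unitarily equivalent to the orthogonal direct sum $\bigoplus_{k}T_{k,\nu}$, and the multiset of its eigenvalues is the disjoint union of the multisets of eigenvalues of the $T_{k,\nu}$. Counting eigenvalues $\ge x$ on both sides yields the claimed equality.

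For the lower bound $N(x,T_{F,\nu})\le N(x,T_{\nu})$, I invoke \reftext{Proposition~\ref{prop:max-min}}: since $F\subset H_{0}^{1}$ is a closed subspace, the supremum defining $\lambda_{i}^{T_{F,\nu}}$ ranges over a strictly smaller family of $i$-dimensional subspaces than the one defining $\lambda_{i}^{T_{\nu}}$, so $\lambda_{i}^{T_{\nu}}\ge\lambda_{i}^{T_{F,\nu}}$ for every $i$. This immediately translates into the desired inequality between counting functions.

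For the upper bound $N(x,T_{\nu})\le N(x,T_{F,\nu})+n$, set $N:=N(x,T_{\nu})$ and pick, via \reftext{Proposition~\ref{prop:max-min}}, an $N$-dimensional subspace $G\subset H_{0}^{1}$ on which $J_{\nu}(\psi,\psi)\ge x\cdot I_{1}(\psi,\psi)$ for all $\psi\in G$. Because in dimension one $H_{0}^{1}\hookrightarrow\mathcal{C}([0,1])$, the point-evaluation map $\pi\colon H_{0}^{1}\to\mathbb{R}^{n}$, $u\mapsto(u(d_{1}),\dots,u(d_{n}))$, is a well-defined linear map with $\ker\pi=F$. Rank-nullity applied to $\pi|_{G}$ gives $\dim(G\cap F)\ge N-n$; the Rayleigh quotient inequality persists on this subspace of $F$, and applying \reftext{Proposition~\ref{prop:max-min}} to $T_{F,\nu}$ yields $\lambda_{N-n}^{T_{F,\nu}}\ge x$ (the case $N\le n$ being trivial), i.e.\ $N(x,T_{F,\nu})\ge N-n$.

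The only delicate point is the orthogonal decomposition step: one must check that $u\in F$ restricts to an element of $H_{0}^{1}((d_{k},d_{k+1}))$ on each subinterval (this follows from the boundary conditions $u(d_{k})=u(d_{k+1})=0$ combined with $u\in H_{0}^{1}((0,1))$), and verify the additivity of $J_{\nu}$, which is precisely where $\nu(\{d_{k}\})=0$ is used. The remaining two inequalities are essentially a min-max comparison and a dimension-counting argument.
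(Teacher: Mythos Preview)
Your proposal is correct and follows essentially the same route as the paper. The only differences are stylistic: for the equality you argue via unitary equivalence of $T_{F,\nu}$ with the orthogonal direct sum $\bigoplus_{k}T_{k,\nu}$, whereas the paper transfers eigenfunctions back and forth explicitly; and for the upper bound you supply the rank--nullity argument directly (using $\ker\pi=F$ with $\pi$ the $n$-tuple of point evaluations), whereas the paper simply invokes \citep[Proposition~1]{MR1328700} after noting $\dim(H_{0}^{1}/F)=n$. Your version is thus slightly more self-contained, and your direct-sum formulation handles multiplicities more transparently than the paper's eigenfunction-chasing.
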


\begin{proof}
From max-min principle we deduce
\begin{equation*}
N\left (x,T_{F,\nu}\right )\leq N\left (x,T_{\nu}\right ).
\end{equation*}
Moreover, we have $\dim \left (H_{0}^{1}/F\right )=n$. Hence, it follows
from \citep[Proposition 1]{MR1328700}
\begin{equation*}
N\left (x,T_{\nu}\right )\leq N\left (x,T_{F,\nu}\right )+n.
\end{equation*}
It remains to show
$N\left (x,T_{F,\nu}\right )=\sum _{k=0}^{n}N\left (x,T_{k,\nu}
\right )$. Let $f$ be an eigenfunction with eigenvalue $\lambda >0$ of
$T_{k,\nu}$ with $\nu \left (\left (d_{k},d_{k+1}\right )\right )>0$. Then
define
\begin{equation*}
g(x)=\mathbbm{1}_{(d_{k},d_{k+1})}f,
\end{equation*}
it follows $g\in F$ and we have for all $h\in F$
\begin{equation*}
\lambda \int _{(0,1)}\nabla h\nabla g\;\mathrm d\Lambda =\lambda
\int _{(d_{k},d_{k+1})}\nabla h\nabla g\;\mathrm d\Lambda =\int _{(d_{k},d_{k+1})}hg
\;\mathrm d\nu =\int _{(0,1)}hg\;\mathrm d\nu .
\end{equation*}
This implies
$N\left (x,T_{F,\nu}\right )\geq \sum _{k=0}^{n}N\left (x,T_{k,\nu}
\right )$. On the other hand, if $f$ is an eigenfunction with eigenvalue
$\lambda >0$ of $T_{F,\nu}$, then $g=\mathbbm{1}_{(d_{k},d_{k+1})}f$ is
an eigenfunction with eigenvalue $\lambda $ of $T_{k,\nu}$ provided
$g\neq 0$ and $\nu \left (\left (d_{k},d_{k+1}\right )\right )>0$. Hence,
we obtain
$N\left (x,T_{F,\nu}\right )=\sum _{k=0}^{n}N\left (x,T_{k,\nu}
\right )$.
\end{proof}



\section*{Acknowledgment}
This research was supported by the {DFG} grant  {Ke 1440/3-1}. We would like
to thank the anonymous referee for her/his valuable comments, which have
contributed to a significant improvement of the presentation.

\phantomsection\addcontentsline{toc}{section}{\refname}

\end{document}